\theoremstyle{plain}
\newtheorem{theorem}{Theorem}[section]                    
\newtheorem{proposition}[theorem]{Proposition}            
\newtheorem{corollary}[theorem]{Corollary}                
\newtheorem{lemma}[theorem]{Lemma}
\newtheorem{claim}[theorem]{Claim}
\theoremstyle{definition}
\newtheorem{remark}[theorem]{Remark}
\newtheorem{definition}[theorem]{Definition}
\def\us#1_#2{\underset{#2}{#1}}
\def\os#1^#2{\overset{#2}{#1}}
\def\i<#1>{\langle {#1} \rangle}
\newcommand{\alg}{{\mathord{\rm alg}}}
\newcommand{\red}{{\mathord{\rm red}}}
\DeclareMathOperator{\Ad}{Ad}
\DeclareMathOperator{\id}{id}
\edef\csname frak\@Alph\@tempcnta\endcsname{\noexpand\mathfrak{\@Alph\@tempcnta}}
\edef\csname l\@Alph\@tempcnta\endcsname{\noexpand\mathbb{\@Alph\@tempcnta}}
\edef\csname cal\@Alph\@tempcnta\endcsname{\noexpand\mathcal{\@Alph\@tempcnta}}
\edef\csname rm\@Alph\@tempcnta\endcsname{\noexpand\mathrm{\@Alph\@tempcnta}}
\edef\csname bf\@Alph\@tempcnta\endcsname{\noexpand\mathbf{\@Alph\@tempcnta}}
\edef\csname frak\@alph\@tempcnta\endcsname{\noexpand\mathfrak{\@alph\@tempcnta}}
\newcommand{\dotimes}{\mathbin{\dot{\otimes}}}
\newcommand{\ddotimes}{\mathbin{\ddot{\otimes}}}
\begin{document}
\title[$KK$-equivalence for amalgamated free product]{$KK$-equivalence for amalgamated free product $\rmC^*$-algebras }
\author{Kei Hasegawa}
\address{Graduate~School~of~Mathematics, Kyushu~University, Fukuoka~819-0395, Japan}
\email{k-hasegawa@math.kyushu-u.ac.jp}
\keywords{amalgamated free product, $KK$-theory}
\begin{abstract}
We prove that {\it any} reduced amalgamated free product $\rmC^*$-algebra is $KK$-equivalent to the corresponding full amalgamated free product $\rmC^*$-algebra. The main ingredient of its proof is Julg--Valette's geometric construction of Fredholm modules with Connes's view for representation theory of operator algebras.
\end{abstract}
\maketitle
\section{Introduction}\renewcommand{\thetheorem}{\Alph{theorem}}

In \cite{Cuntz-82}\cite{Cuntz-83} Cuntz gave a strategy of computing the $K$-theory of the reduced $\rmC^*$-algebra $\rmC^*_\red (\Gamma)$ of a given discrete group $\Gamma$. The strategy consists of two parts: 
\begin{itemize}
\item[(1)] proving that the canonical surjection $\lambda : \rmC^*(\Gamma) \to \rmC^*_\red (\Gamma)$ (where $\rmC^*(\Gamma)$ denotes the full $\rmC^*$-algebra of $\Gamma$) gives a $KK$-equivalence, that is, has an inverse in $KK$-theory, and 
\item[(2)] computing the $K$-theory of $\rmC^*(\Gamma)$.
\end{itemize} 
In fact, usual computations in $K$-theory are made by establishing suitable exact sequences, and the full group $\rmC^*$-algebra $\rmC^*(\Gamma)$ is easier to handle than the reduced one $\rmC^*_\red(\Gamma)$.
By the strategy, Cuntz indeed gave a much simpler proof of celebrated Pimsner--Voiculescu's result of the $K$-theory of $\rmC^*_\red(\mathbb{F}_n)$ (\cite{Pimsner-Voiculescu-82}).
Then Julg and Valette \cite{Julg-Valette} achieved part (1) of the strategy when $\Gamma$ acts on a tree with amenable stabilizers.
In the direction, Pimsner gave in \cite{Pimsner} an optimal result, but his strategy looks different from Cuntz's one.  

\medskip
It is very natural (at least for us) to try to adapt Cuntz's strategy to amalgamated free product $\rmC^*$-algebras.
Part (2) of the strategy was achieved by Thomsen \cite{Thomsen} under a very weak assumption.
Thus, our main problem is part (1) of the strategy.
It was Germain \cite{Germain-96}\cite{Germain-97} who first tried to examine the strategy for plain free product $\rmC^*$-algebras,
and he obtained the desired $KK$-equivalence result for plain free product $\rmC^*$-algebras of nuclear $\rmC^*$-algebras.
Following Germain's idea in \cite{Germain-96}\cite{Germain-97-2} we recently proved in \cite{Hasegawa} (also see \cite{Germain-Sarr}) that the canonical surjection onto a given reduced amalgamated free product $\rmC^*$-algebra from the corresponding full one gives a $KK$-equivalence under the assumption that every free component is ``strongly relative nuclear" against the amalgamated subalgebra.
This was is a byproduct of our attempt to seek for a suitable formulation of ``relative nuclearity" for inclusions of $\rmC^*$-algebras. 

\medskip
In this paper we adapt, unlike \cite{Germain-96}\cite{Germain-97-2}\cite{Germain-Sarr}\cite{Hasegawa}, Julg--Valette's geometric idea to the problem, and  establish the optimal $KK$-equivalence result for amalgamated free product $\rmC^*$-algebras.
We emphasize that the core part of the proof is very simple and just 3 pages long (though this paper is rather self-contained).
To state our main result precisely, let us give a few terminologies.
Let $\{ (B \subset A_i , E^{A_i}_B ) \}_{i \in \calI }$ be a countable family of quasi-unital inclusions of separable $\rmC^*$-algebras with nondegenerate conditional expectations from $A_i$ onto $B$.
Here $B \subset A_i$ is quasi-unital if $BA_i B$ is norm-dense in $A_i$, and also $E^{A_i}_B : A _i \to B$ is nondegenerate if the associated GNS representation is faithful.
Let $(A,E) = \bigstar_{B, i \in \calI} (A_i, E^{A_i}_i)$ be the reduced amalgamated free product and we call $A$ the reduced amalgamated free product $\rmC^*$-algebra. Also, let $\mathfrak{A} = \bigstar_{B, i \in \calI} A_i$ be the full amalgamated free product $\rmC^*$-algebra. With the notation we will prove the following: 

\begin{theorem}\label{thm-KK}
The canonical surjection $\lambda : \mathfrak{A} \to A$ gives a $KK$-equivalence without any extra assumption.  
\end{theorem}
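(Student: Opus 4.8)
The plan is to produce an explicit inverse $\beta \in KK(A,\mathfrak{A})$ to $[\lambda] \in KK(\mathfrak{A},A)$ and to verify the two Kasparov-product identities $\beta \otimes_{\mathfrak{A}} [\lambda] = 1_A$ and $[\lambda] \otimes_A \beta = 1_{\mathfrak{A}}$, which together say that $[\lambda]$ is a $KK$-equivalence. Writing $\mathring{A}_i = \ker E^{A_i}_B$ and letting
\[
\mathcal{F} = B \oplus \bigoplus_{n \geq 1}\ \bigoplus_{i_1 \neq \cdots \neq i_n} \mathring{A}_{i_1} \otimes_B \cdots \otimes_B \mathring{A}_{i_n}
\]
be the Fock $B$-module carrying the reduced representation $A \hookrightarrow \mathcal{L}_B(\mathcal{F})$, I would form the right Hilbert $\mathfrak{A}$-module $\mathcal{H}^{\mathrm{vert}} = \mathcal{F} \otimes_B \mathfrak{A}$, viewing $\mathfrak{A}$ as a $B$--$\mathfrak{A}$ bimodule through the canonical copy of $B$. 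Since the left reduced action of $A$ on $\mathcal{F}$ commutes with the right $B$-action, it descends to a representation $\pi : A \to \mathcal{L}_{\mathfrak{A}}(\mathcal{H}^{\mathrm{vert}})$; this resolves the variance, giving a genuine right Hilbert $\mathfrak{A}$-module with a left $A$-action.

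To build the operator I would follow Julg and Valette geometrically. The reduced words index the vertices of the Bass--Serre tree of $\{A_i\}$ over $B$, so I set $\mathcal{H} = \mathcal{H}^{\mathrm{vert}} \oplus \mathcal{H}^{\mathrm{edge}}$, where $\mathcal{H}^{\mathrm{edge}}$ is the analogous $\mathfrak{A}$-module indexed by the edges, graded vertex/edge, with $\pi$ acting diagonally by the natural Fock-type representations. Let $V$ send each non-root vertex to the unique adjacent edge pointing toward the root (vacuum) $\Omega \in B$, and put $F = V + V^\ast$. Then $F^2 - 1 = -p_\Omega$, the projection onto $\Omega \otimes \mathfrak{A}$, and the standing quasi-unitality and nondegeneracy hypotheses guarantee that $p_\Omega$ and its relatives genuinely lie in $\mathcal{K}_{\mathfrak{A}}(\mathcal{H})$. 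Because each generator $a \in A_i$ changes the length of a reduced word by at most one, $[\pi(a),F]$ has finite propagation and is supported near the root, hence is $\mathfrak{A}$-compact; with $\pi(a)(1-F^2) = \pi(a)p_\Omega \in \mathcal{K}_{\mathfrak{A}}(\mathcal{H})$ this makes $(\mathcal{H},\pi,F)$ a Kasparov $(A,\mathfrak{A})$-module, defining $\beta$.

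For $\beta \otimes_{\mathfrak{A}} [\lambda] = 1_A$, internal tensoring with the $\mathfrak{A}$--$A$ bimodule $A$ along $\lambda$ replaces $\mathcal{H}^{\mathrm{vert}}$ by $\mathcal{F} \otimes_B A$, on which $A$ acts by the reduced Fock representation and $F$ descends. This is the concrete, reduced-side computation: the vacuum identifies a copy of $A$ inside $\mathcal{F} \otimes_B A$ as the subspace where $F$ vanishes, and a ``contract the tree to its root'' homotopy (scaling away the higher-length summands while tracking the compact defect of $F$) connects the module to the degenerate cycle $(A,\mathrm{id},0)$. I expect this to be the routine direction.

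The identity $[\lambda] \otimes_A \beta = 1_{\mathfrak{A}}$ is where I anticipate the main obstacle. Here the left action on $\mathcal{H}$ is $\mathfrak{A} \xrightarrow{\lambda} A \xrightarrow{\pi} \mathcal{L}_{\mathfrak{A}}(\mathcal{H})$, which factors through the \emph{reduced} algebra, whereas $1_{\mathfrak{A}} = (\mathfrak{A},\mathrm{id},0)$ demands the identity representation; indeed $x \mapsto \Omega \otimes x$ fails to be invariant because $\pi(\lambda(x))(\Omega \otimes 1) = (\lambda(x)\Omega)\otimes 1$ spreads across the Fock module. Reconciling the two is exactly where the universal property of the full amalgamated free product enters: a representation of $\mathfrak{A}$ is nothing but a compatible family of representations of the $A_i$ agreeing on $B$ (Connes's correspondence viewpoint), so I would construct a Cuntz-type rotation homotopy that independently deforms the action of each $A_i$ along the length-grading, connecting the $\lambda$-pulled-back cycle to $1_{\mathfrak{A}}$. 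The hard part is producing this homotopy with honest $\mathfrak{A}$-compactness control in the Hilbert-\emph{module} (not Hilbert-space) setting and verifying that the family remains a genuine Kasparov $\mathfrak{A}$--$\mathfrak{A}$ cycle throughout; this is precisely where the flexibility of the full algebra is indispensable.
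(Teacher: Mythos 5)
Your setup (Fock module tensored with $\mathfrak{A}$, a Julg--Valette operator with one-dimensional defect at the root, universality of $\mathfrak{A}$ to deform the left action) is the same framework the paper uses, but you have the difficulty of the two identities exactly inverted, and the direction you dismiss as ``routine'' is where your argument has a genuine gap. The identity $\beta\otimes_{\mathfrak{A}}[\lambda]=1_A$ (the paper's $\lambda\circ\alpha+\mathrm{id}_A=0$) is precisely the part the paper calls its most original contribution: in the amenable (quantum) group setting this step is handled by the counit, and no analogue exists here. Your proposed ``contract the tree to its root'' homotopy does not work: the subspace $\Omega\otimes A$ where $F$ degenerates is not invariant under the left $A$-action (as you yourself observe in the other direction), scaling away the higher-length summands is not equivariant even modulo compacts, and there is no trivial representation of the reduced algebra to contract onto. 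The paper's actual route is quite different: it runs the \emph{same} rotation homotopy $\phi_t=(\pi^-\circ\iota_1)\star(\mathrm{Ad}\,u_t\circ\pi^-\circ\iota_2)$ built from the universal property of $\mathfrak{A}$, and then proves the key new fact that each intermediate representation $\phi_t\otimes 1_X$ \emph{factors through} $\lambda$. That factorization is established by a freeness criterion (Lemma 2.4: a cyclic subspace on which reduced words have vanishing matrix coefficients forces factorization through the reduced algebra) together with an intricate combinatorial verification (Claim 3.5) that the subspace $\Gamma=w(\xi_0B\,\dot{\otimes}_B\,X(\ell,1))+\xi_0B\,\ddot{\otimes}_B\,X(\ell,2)$ is cyclic and satisfies that freeness condition; a homotopy-factorization lemma (Lemma 2.3 with $P=\mathfrak{A}$, $Q=A$) then descends the whole homotopy to $KK(A,A)$. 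Nothing in your proposal supplies a substitute for this step, and without it the reduced-side identity is unproved. (Your assessment of the other direction, $[\lambda]\otimes_A\beta=1_{\mathfrak{A}}$, is essentially right in spirit --- a rotation homotopy via universality, following Vergnioux --- but there the compactness control you worry about is automatic, since the rotation $u_t$ lives in $\pi^-(B)'\cap(\mathbb{C}1+\mathcal{K}_{\mathfrak{A}}(Z^-))$ and acts only on the two distinguished vacuum copies $\zeta_1\mathfrak{A}\oplus\zeta_2\mathfrak{A}$.)

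Two further omissions, less fatal but real. First, your ``analogous $\mathfrak{A}$-module indexed by the edges'' is never constructed; in the paper the vertex-type modules are $Y_i\otimes_{A_i}\mathfrak{A}$ with $Y_i$ the GNS module of the canonical conditional expectation $E_{A_i}\colon A\to A_i$, and identifying them with the Fock-type pieces requires proving the unitarity of $S_i\colon X(r,i)\otimes_B A_i\to Y_i$ (Lemma 3.1, an induction showing $E_{A_i}(x^*x)=E(x^*x)$ on reduced words ending outside $i$); this is also where quasi-unitality and nondegeneracy actually get used, e.g.\ $\zeta_i=\eta_i\otimes 1_{\mathfrak{A}}$ need not lie in the module while $\zeta_i\mathfrak{A}$ does. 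Second, the paper treats only two free factors directly and reduces countable $\mathcal{I}$ to that case by a matrix trick (Proposition 3.6: realizing $\mathcal{K}\otimes\mathfrak{A}$ and $\mathcal{K}\otimes A$ as a two-factor amalgamated free product of $\sum_i A_i$ and $\mathcal{K}\otimes B$ over $c_0\otimes B$, then invoking $KK$-stability); your all-factors-at-once tree would need its own multi-factor bookkeeping that you do not provide. Finally, a small sign point: with your grading the Julg--Valette cycle satisfies $\alpha\circ\lambda=-\mathrm{id}_{\mathfrak{A}}$ and $\lambda\circ\alpha=-\mathrm{id}_A$, so the inverse of $[\lambda]$ is $-\alpha$, not the cycle itself.
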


The proof is done by translating the ``geometric" construction of Fredholm modules due to Julg--Valette (and its quantum group analog due to Vergnioux \cite{Vergnioux}) into a $\rmC^*$-algebraic language following Connes's view of correspondences.
This is similar to our previous work \cite{Hasegawa} on relative nuclearity.
More precisely, we will easily prove that the canonical surjection $\lambda$ gives a $KK$-subequivalence like Julg--Valette \cite{Julg-Valette} and Vergnioux \cite{Vergnioux}.
Then we will directly prove that $\lambda$ indeed gives a $KK$-equivalence.
The latter is unnecessary in the amenable (quantum) group case \cite{Julg-Valette}\cite{Vergnioux} thanks to the existence of counits,
and is the most original part of the present paper. 
As a bonus of the present approach we obtain the following corollary:     
    
\begin{corollary}\label{cor-K-nuc}
Both $\mathfrak{A}$ and $A$ are K-nuclear if all the $A_i$ are nuclear. 
\end{corollary}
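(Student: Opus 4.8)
The plan is to derive Corollary~\ref{cor-K-nuc} from Theorem~\ref{thm-KK} by keeping track of nuclearity in the cycles produced by its proof, using two standard facts about Skandalis's nuclear $KK$-theory $KK^{\nuc}$: first, that a separable $D$ is $K$-nuclear precisely when $[\id_D]$ lies in the range of the forgetful map $KK^{\nuc}(D,D)\to KK(D,D)$, and that $K$-nuclearity is invariant under $KK$-equivalence; second, that $K$-nuclearity is detected on one side of a Kasparov product, i.e. if a class $\beta\in KK(A,\mathfrak{A})$ is represented by a Kasparov cycle whose left $A$-action is nuclear, then $\beta\in KK^{\nuc}(A,\mathfrak{A})$ and moreover $\beta\otimes_{\mathfrak{A}} y\in KK^{\nuc}(A,A)$ for every $y\in KK(\mathfrak{A},A)$. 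Granting these, by Theorem~\ref{thm-KK} the algebras $\mathfrak{A}$ and $A$ are $KK$-equivalent, so by the invariance it suffices to prove that the reduced algebra $A$ is $K$-nuclear.

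First I would record that all the relevant building blocks are nuclear, hence $K$-nuclear. The $A_i$ are nuclear by hypothesis, and the amalgam $B$ is nuclear as well: fixing any index $i$, the nondegenerate conditional expectation $E^{A_i}_B : A_i\to B$ lets one transport a completely positive factorization of $\id_{A_i}$ through matrix algebras to one of $\id_B$, by pre-composing with the inclusion $B\hookto A_i$ and post-composing with $E^{A_i}_B$; thus $B$ is nuclear.

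The core of the argument is to produce the required nuclear representative of $[\id_A]$. The proof of Theorem~\ref{thm-KK} constructs, following Julg--Valette and Vergnioux, an explicit inverse $\beta\in KK(A,\mathfrak{A})$ of $[\lambda]\in KK(\mathfrak{A},A)$, so that $\beta\otimes_{\mathfrak{A}}[\lambda]=1_A$; the class $\beta$ is carried by a Fredholm/Kasparov cycle whose underlying Hilbert module is assembled, by the amalgamated Fock recipe, out of copies of $B$ and of the free components $A_i$. The point is that when $B$ and all the $A_i$ are nuclear, the left action of $A$ on this module is a nuclear map, so $\beta\in KK^{\nuc}(A,\mathfrak{A})$. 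The second fact above then shows that $1_A=\beta\otimes_{\mathfrak{A}}[\lambda]$ lies in the image of $KK^{\nuc}(A,A)\to KK(A,A)$, which exhibits $A$ as $K$-nuclear; the first fact in turn propagates $K$-nuclearity across the $KK$-equivalence of Theorem~\ref{thm-KK} to $\mathfrak{A}$.

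I expect the only genuine work to be the verification that the left action of $A$ on the Julg--Valette module is nuclear, i.e. that it factors in the point-norm topology through matrix amplifications of the nuclear algebras $B$ and $A_i$ built into the Fock construction; this is precisely where the nuclearity hypothesis on the $A_i$ is consumed. The two $KK^{\nuc}$-theoretic inputs and the nuclearity of $B$ are routine and require no new ideas.
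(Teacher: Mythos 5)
Your proposal is correct in substance and its skeleton matches the paper's: both arguments rest on the observation that the explicit Julg--Valette inverse of $[\lambda]$ is carried by a cycle whose correspondence part is built from $B$ and the $A_i$, and both need the (standard) fact that $B$ is nuclear because $\id_B = E^{A_i}_B \circ \iota$ lets the c.p.\ approximations of $\id_{A_i}$ descend to $B$. Where you diverge is in the $KK$-theoretic packaging. You route everything through Skandalis's $KK_{\nuc}$ groups: nuclearity of one factor propagating through Kasparov products, and invariance of $K$-nuclearity under $KK$-equivalence (used to treat $A$ first and transfer to $\frakA$). These facts are indeed in Skandalis's paper, but the paper needs none of this machinery: it writes $\id_\frakA = \tau^{-1}(\beta)\circ\lambda$ and $\id_A = \lambda\circ\tau^{-1}(\beta)$, where composing with the $*$-homomorphism $\lambda$ only precomposes the left action (resp.\ replaces $Y$ by $Y\otimes_\lambda A$), and then invokes a single soft fact (the remark after Theorem \ref{thm-nuclear}, i.e.\ \cite[Remark 2.11]{Hasegawa}): \emph{any} correspondence of the form $(Y\otimes_D Z,\ \pi_Y\otimes_D 1_Z)$ is nuclear once $D$ is nuclear, since $a\mapsto \i<\xi,(\pi_Y(a)\otimes 1)\xi>$ factors through $D$. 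This also dissolves what you flag as ``the only genuine work'': you do not need to verify by hand that the left action of $A$ on the Fock-type module admits point-norm factorizations through matrix amplifications --- the cycle's correspondence part is, by construction, a direct sum of interior tensor products $\pi\otimes_D 1$ over nuclear algebras, so nuclearity is automatic, a one-line observation rather than an analysis of the Fock structure. Two small cautions if you pursue your route: for countable $\calI$ the relevant cycle arises only after the stabilization of Propositions \ref{prop-infty-2} and \ref{prop-stab}, so the middle algebras are $c_0\otimes B$, $\sum_i A_i$ and $\calK\otimes B$ (all nuclear under the hypothesis, but your description implicitly assumes the two-component picture); and the actual identities are $\alpha\circ\lambda = -\id_\frakA$, $\lambda\circ\alpha=-\id_A$, so the inverse is $-\tau^{-1}(\beta)$ --- harmless, since negation flips the grading without changing the correspondence, but worth stating. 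In short: your approach works and is a legitimate alternative, buying generality (a reusable $KK_{\nuc}$ transfer principle) at the cost of external inputs, while the paper's argument is more elementary and self-contained, exhibiting nuclear representatives of both identity classes directly.
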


Throughout this paper, we employ the following standard notation:
For a Hilbert space $H$, we denote by $\lB ( H)$ the bounded linear operators on $H$ and by $\lK ( H)$ the compact ones on $H$.
For $\rmC^*$-algebras $A$ and $B$, $A \otimes B$ stands for the minimal tensor product.
We use the symbol $\odot$ for algebraic tensor products. 
For a subset $\calS$ of a normed space $X$, we denote by $[\calS]$ the closed linear subspace of $X$ generated by $\calS$.

\section{preliminaries}\setcounter{theorem}{0} \renewcommand{\thetheorem}{\arabic{section}.\arabic{theorem}}

\subsection{$\rmC^*$-correspondneces}\label{ss-cor}
For the theory of Hilbert $\rmC^*$-modules, we refer to Lance's book \cite{Lance-95}.
Let $A$ and $B$ be $\rmC^*$-algebras.
An \emph{$A$-$B$ $\rmC^*$-correspondence} is a pair $(X, \pi_X)$ such that $X$ is a Hilbert $B$-module and $\pi_X$ is a $*$-homomorphism from $A$ into the $\rmC^*$-algebra $\lL_B (X)$ of right $B$-linear adjointable operators on $X$.
We denote by $\lK_B (X)$ the $\rmC^*$-ideal of $\lL_B (X)$ generated by ``rank one operators'' $\theta_{\xi, \eta}$, $\xi, \eta \in X$
defined by $\theta_{\xi, \eta } ( \zeta ):= \xi \i<\eta, \zeta >$.
The \emph{identity $\rmC^*$-correspondence over $A$} is the pair $(A, \lambda_A)$, where $A$ is equipped with the $A$-valued inner product $\i< x, y > = x^* y$ for $x, y \in A$ and $\lambda_A : A \to \lL_A ( A)$ is defined by the left multiplication.
It is known that $\lL_A (A)$ is naturally isomorphic to the multiplier algebra $\calM (A)$ of $A$.

\medskip
We use the following two notions of tensor products for Hilbert $\rmC^*$-modules.
Let $X$ be a Hilbert $B$-module and $(Y, \phi )$ be a $B$-$C$ $\rmC^*$-correspondence.
We denote by $X \otimes_{\phi} Y$ the \emph{interior tensor product} of $X$ and $(Y, \phi )$,
which is given by separation and completion of $X \odot Y$ with respect to the $C$-valued inner product $\i< \xi \otimes \eta, \xi' \otimes \eta' > = \i<{ \eta, \phi ( \i< \xi, \xi '> ) \eta }>$.
There is a canonical map $\lL_B (X) \to \lL_C ( X\otimes_\phi Y)$ sending $T$ to the operator $T \otimes_\phi 1_Y : \xi \otimes \eta \mapsto ( T \xi )\otimes \eta$.
For a given $*$-homomorphism $\pi_X : A \to \lL_B (X)$ we define $\pi_X \otimes_\phi 1_Y : A \to \lL_C (X \otimes_\phi Y)$ by $(\pi_X \otimes_\phi 1_Y ) (a )  = \pi_X (a) \otimes_\phi 1_Y$.
When no confusion may arise, we use the notations $X \otimes_B Y$ and $\pi_X \otimes_B 1_Y$ for short.

For a Hilbert $D$-module $Z$, we denote by $X \otimes Z$ the \emph{exterior tensor product} of $X$ and $Y$, which is the completion of $X \odot Y$ with respect to the $B \otimes D$-valued inner product $\i< \xi \otimes \zeta, \xi' \otimes \zeta' > = \i< \xi, \xi '> \otimes \i< \zeta, \zeta '>$.
When $(Z, \pi_Z)$ is a $C$-$D$ $\rmC^*$-correspondence, there is a natural $*$-homomorphism $\pi_X \otimes \pi_Z : A \otimes C \to \lL_{B \otimes D} ( X \otimes Z)$ so that $(X \otimes Z, \pi_X \otimes \pi_Z)$ is an $A\otimes C$-$B \otimes D$ $\rmC^*$-correspondence.

\medskip
Let $B \subset A$ be a quasi-unital inclusion of $\rmC^*$-algebras (i.e., $ BAB$ is norm-dense in $A$) with conditional expectation $E : A \to B$.
We denote by $L^2 (A, E)$ the Hilbert $B$-module given by separation and completion of $A$ with respect to the $B$-valued inner product $\i< x, y > = E ( x^* y )$ for $x, y \in A$, and by $\pi_E : A \to \lL_B ( L^2 ( A, E))$ the $*$-homomorphism induced from the left multiplication.
The conditional expectation $E$ is said to be {\it nondegenerate} if $\pi_E$ is faithful.
We denote by $1_A$ the unit of the multiplier algebra of $A$.
Since the inclusion $B \subset A$ is quasi-unital,
$B$ contains an approximate unit for $A$.
In particular, $A$ is unital if and only if so is $B$, and they should have a common unit.
Thus, we can uniquely extend $E$ to a conditional expectation $\widetilde{E} : A + \lC 1_A \to B + \lC 1_A$.
Let $\xi_E$ be the vector in $L^2 (A + \lC 1_A, \widetilde{E} )$ corresponding to $1_A$.
We always identify $L^2 (A, E)$ with $[ \pi_{\widetilde{E}} (A) \xi_E ] \subset L^2 (A + \lC 1_A, \widetilde{E})$ and
call the triple $(L^2 (A, E), \pi_E, \xi_E)$ the \emph{GNS-representation} associated with the conditional expectation $E$.
Notice that $\xi_E$ need not to be in $L^2 (A, E)$ when $A$ is non-unital.

\subsection{$KK$-theory}\label{ss-KK} 
Throughout this subsection, all $\rmC^*$-algebras are assumed to be separable for simplicity.
We refer the reader to \cite{Blackadar-book} for $KK$-theory.

\begin{definition}
For (trivially graded) $\rmC^*$-algebras $A$ and $B$, a {\it Kasparov $A$-$B$ bimodule} is a triple $(X, \phi, F )$ such that $X$ is a countably generated graded Hilbert $B$-module, $\phi :A \to \lL_B ( X)$ is a $*$-homomorphism of degree 0,
and $F \in \lL_B (X)$ is of degree 1 and satisfies the following condition:
\begin{itemize}
\item $[F, \phi (a)] \in \lK_B (X)$ for $a\in A$,
\item $(F-F^*) \phi (a) \in \lK_B (X)$ for $a\in A$,
\item $(1 - F^2) \phi (a) \in \lK_B (X)$ for $a \in A$.
\end{itemize}
When $[F, \phi (a)]=(F-F^*) \phi (a)=(1 - F^2) \phi (a)=0 $ holds for every $a\in A$,
we say that $(X, \phi, F )$ is \emph{degenerate}.
We denote by $\lE (A,B)$ and $\lD (A, B)$ the corrections of Kasparov $A$-$B$ bimodules and degenerate ones, respectively.
\end{definition}

We say that two Kasparov $A$-$B$ bimodules $(X, \phi, F)$ and $(Y, \psi, G)$ are \emph{unitarily equivalent}, denoted by $(X, \phi, F) \cong (Y, \psi, G)$, if there exists a unitary $U \in \lL (X , Y)$ of degree $0$ such that $\psi = \Ad U \circ \phi$ and $G= UFU^*$.

For any Hilbert $B$-module $X$, we set $IX:=C([0,1]) \otimes X$.
In particular, we set $IB = C([0, 1] ) \otimes B$.
For each $t \in [0, 1]$ we still denote by $t$ the surjectiove $*$-homomorphism $IB \cong C ( [0,1 ], B) \ni f \mapsto f (t ) \in B$.
Note that we have a natural isomorphism $IX \otimes_t B \cong X$ for every $t \in [0,1]$.

\begin{definition}
Two Kasparov $A$-$B$ bimodules $(X_0, \phi_0, F_0 )$ and $(X_1, \phi_1, F_1 )$ are said to be \emph{homotopic} if there exists a Kasparov $A$-$IB$ bimodule $(Y, \psi, G)$ such that $(Y \otimes_t  B, \psi \otimes_t 1_B, G \otimes_t 1_B)  \cong (X_t , \phi_t, F_t )$ for $t = 0,1$.
The $KK$-group $KK(A,B)$ is the set of homotopy equivalence classes of all Kasparov $A$-$B$ bimodules.
\end{definition}

The next technical lemma will be used later.
\begin{lemma}\label{lem-homotopy}
Let $P, Q$ and $R$ be separable $\rmC^*$-algebras and let $(X, \psi_i, F) \in \lE ( Q, R)$ be given for $i =0, 1$.
Suppose that there exist a surjective $*$-homomorphism $\pi : P \to Q$ and a family of Kasparov $P$-$R$ bimodules $(X, \phi_t, F)$ for $t \in [ 0,1]$ satisfying
\begin{itemize}
\item[(i)] the mapping $[ 0,1 ] \ni t \mapsto \phi_t (  a )$ is strictly continuous for each $a \in P$;
\item[(ii)] $\phi_t $ factors through $\pi : P \to Q$ for every $t \in [ 0,1 ]$;
\item[(iii)] $\phi_i = \psi_i \circ \pi$ holds for $i  = 0,1$.
\end{itemize}
Then, $(X, \psi_0, F)$ and $(X, \psi_1, F)$ are homotopic. 
\end{lemma}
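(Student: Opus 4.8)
The plan is to manufacture a single Kasparov $Q$-$IR$ bimodule whose evaluations at the endpoints $t=0,1$ recover $(X,\psi_0,F)$ and $(X,\psi_1,F)$; by the very definition of homotopy this is all that is required. The first step is to use the surjectivity of $\pi$ together with hypothesis (ii) to descend the whole path from $P$ to $Q$. Since each $\phi_t$ factors through $\pi$ and $\pi$ is onto, there is a unique $*$-homomorphism $\widetilde\psi_t : Q \to \lL_R(X)$ with $\phi_t = \widetilde\psi_t\circ\pi$; surjectivity makes $\widetilde\psi_t$ well defined, and combined with (iii) it forces $\widetilde\psi_i=\psi_i$ for $i=0,1$. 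Because every $b\in Q$ is of the form $\pi(a)$, the strict continuity in (i) passes to the statement that $t\mapsto\widetilde\psi_t(b)$ is strictly continuous, and each triple $(X,\widetilde\psi_t,F)$ is again a Kasparov $Q$-$R$ bimodule, since the three defect operators depend on $a$ only through $\phi_t(a)=\widetilde\psi_t(\pi(a))$. Thus I have reduced matters to a strictly continuous path $\widetilde\psi_t$ of representations of $Q$ on the \emph{fixed} module $X$, sharing the \emph{fixed} operator $F$, joining $\psi_0$ to $\psi_1$.

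Next I would assemble the field. On $IX=C([0,1])\otimes X$, regarded as a Hilbert $IR$-module, set $G:=1\otimes F$ (of degree $1$) and define $\Psi:Q\to\lL_{IR}(IX)$ by $(\Psi(b)f)(t):=\widetilde\psi_t(b)\,f(t)$ for $f\in C([0,1],X)$. Here strict continuity is exactly what is needed for well-definedness: testing strict convergence against the rank-one operators $\theta_{\xi,\eta}$ shows that $t\mapsto\widetilde\psi_t(b)\,\xi$ is norm continuous for each fixed $\xi\in X$, whence $t\mapsto\widetilde\psi_t(b)f(t)$ is a continuous section and $\Psi(b)$ does map $IX$ into itself. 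A routine verification then gives that $\Psi(b)$ is adjointable with $\Psi(b)^*=\Psi(b^*)$ and that $\Psi$ is a degree-$0$ $*$-homomorphism. It is worth stressing that norm continuity of $t\mapsto\widetilde\psi_t(b)$ itself is neither available nor required at this stage.

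It remains to check that $(IX,\Psi,G)\in\lE(Q,IR)$ and to read off the endpoints. Under the identification $\lK_{IR}(IX)\cong C([0,1],\lK_R(X))$, the three defect operators $[G,\Psi(b)]$, $(G-G^*)\Psi(b)$ and $(1-G^2)\Psi(b)$ are precisely the fields $t\mapsto[F,\widetilde\psi_t(b)]$, $t\mapsto(F-F^*)\widetilde\psi_t(b)$ and $t\mapsto(1-F^2)\widetilde\psi_t(b)$, each of which is compact for every individual $t$ because $(X,\widetilde\psi_t,F)$ is Kasparov. The main obstacle is to upgrade this \emph{pointwise} compactness to membership in $C([0,1],\lK_R(X))$, i.e.\ to show these compact-operator-valued fields are norm continuous in $t$; this is the one point where mere strict continuity of $\widetilde\psi_t$ must be used in tandem with the fact that $F$ is held fixed along the path, and it is the delicate heart of the argument. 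Granting the Kasparov property, the isomorphism $IX\otimes_t R\cong X$ identifies $(IX\otimes_t R,\ \Psi\otimes_t 1_R,\ G\otimes_t 1_R)$ with $(X,\widetilde\psi_t,F)$, which at $t=0,1$ is exactly $(X,\psi_0,F)$ and $(X,\psi_1,F)$ by the reduction in the first step. Hence $(IX,\Psi,G)$ is the desired homotopy, and the two bimodules are homotopic.
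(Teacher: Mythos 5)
Your construction is, modulo the order of two steps, the same as the paper's: the paper first assembles the family into a single $*$-homomorphism $\phi : P \to \lL_{IR}(IX)$ and then descends to $Q$ via the norm estimate $\|\phi(a)\| = \sup_{0\le t\le 1}\|\phi_t(a)\| \le \|\pi(a)\|$, whereas you descend fiberwise first (legitimate, by surjectivity of $\pi$ and (ii)) and then assemble; that difference is immaterial. The genuine problem is the step you explicitly leave open, namely that the defect fields $t \mapsto [F, \widetilde\psi_t(b)]$, $t \mapsto (F - F^*)\widetilde\psi_t(b)$, $t \mapsto (1 - F^2)\widetilde\psi_t(b)$ lie in $\lK_{IR}(IX) \cong C([0,1], \lK_R(X))$, i.e.\ are \emph{norm} continuous in $t$. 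You correctly call this the heart of the argument, but then write ``granting the Kasparov property,'' which is not a proof --- and in fact no proof from hypotheses (i)--(iii) as literally stated can exist. Strict continuity plus fiberwise compactness does not imply norm continuity of the defects: take $P = Q = R = \lC$, $\pi = \id$, $X = H$ a separable graded Hilbert space with odd part $0$, $F = 0$, and $\phi_t(1) = \theta_{v_t, v_t}$ for $t > 0$, where $(v_t)$ is a norm-continuous path in the unit sphere of $H$ running through an orthonormal basis as $t \downarrow 0$ (so $v_t \to 0$ weakly), with $\phi_0 = 0$. Each fiber $(X, \phi_t, 0)$ is a Kasparov module since $\theta_{v_t,v_t}$ is compact, and $t \mapsto \phi_t(1)$ is strictly continuous even at $t=0$ because $\|K\theta_{v_t,v_t}\| = \|Kv_t\| \to 0$ and $\|\theta_{v_t,v_t}K\| = \|K^* v_t\| \to 0$ for every compact $K$; yet $(1-F^2)\phi_t(1) = \theta_{v_t,v_t}$ has norm $1$ for $t>0$ and norm $0$ at $t=0$, so the assembled field is not Kasparov --- and indeed the endpoint classes in $KK(\lC,\lC) \cong \lZ$ are $1$ and $0$, so the endpoints are not even homotopic. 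Thus the step you ``grant'' cannot be closed under your (and the lemma's literal) reading of (i).

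For comparison, the paper disposes of exactly the same point with the words ``by assumption, there exists a $*$-homomorphism $\phi$ \dots such that $(IX, \phi, F \otimes 1_{C([0,1])}) \in \lE(P, IR)$''; in effect it reads the hypotheses as guaranteeing that the assembled triple is Kasparov, and this is how the lemma must be understood (equivalently, (i) should be strengthened). In the two places where the lemma is applied the stronger input is available: there $\phi_t = (\pi^- \circ \iota_1) \star (\Ad u_t \circ \pi^- \circ \iota_2)$ (and its image under $- \otimes_\lambda 1_A$) with $(u_t)$ a \emph{norm}-continuous path of unitaries, so $t \mapsto \phi_t(a)$ is norm continuous for $a$ in the dense $*$-subalgebra generated by $A_1 \cup A_2$ and hence, by contractivity and uniform approximation, for all $a \in \frakA$; norm continuity of the path, together with $F$ being fixed, makes each defect field norm continuous with compact values via $\|[F,\phi_t(a)] - [F,\phi_s(a)]\| \le 2\|F\|\,\|\phi_t(a) - \phi_s(a)\|$ and the analogous estimates, which is precisely what membership in $C([0,1], \lK_R(X))$ requires. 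So to turn your write-up into a proof you must either add this norm-continuity hypothesis/argument (which the intended applications supply) or build the Kasparov property of the assembled field into the assumptions, as the paper's one-line proof tacitly does; as it stands, the granted step is a genuine and, under bare strict continuity, unfixable gap.
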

\begin{proof}
By assumption, there exists a $*$-homomorphism $\phi : P \to \lL_{IR} ( IX)$ such that $(IX, \phi, F \otimes 1_{C [0,1] }) \in \lE ( P, IR)$ and $\phi \otimes_t 1_R  = \phi_t$ for $t \in [ 0,1]$.
Since one has $\| \phi ( a) \| = \sup_{0 \leq t \leq 1} \| \phi_t ( a) \|  \leq \| \pi ( a) \|$ for $ a\in P$,
there exists $\psi : Q \to \lL_{IR} (IX)$ such that $\phi = \psi \circ \pi$.
We then have $(IX, \psi, F \otimes 1_{C ([0,1] )} ) \in \lE ( Q, IR)$ and the evaluations of this Kasparov bimodule at endpoints are exactly $(X, \psi_i, F)$, $i =0,1$.
\end{proof}

The $KK$-group becomes an additive group in the following way:
For $\alpha, \beta \in KK (A, B)$ implemented by $(X, \phi, F), (Y, \psi, G)$, respectively, $\alpha + \beta$ is the element implemented by $(X \oplus Y, \phi \oplus \psi, F \oplus G)$.
All degenerate Kasparov bimodules are homotopic to the trivial bimodule $0=(0,0,0)$ and define the zero element in $KK(A, B)$.
Let $X_0$ and $X_1$ be the even and odd parts of $X$ so that $X = X_0 \oplus X_1$ and let $-X$ be the graded Hilbert $B$-module with the even part $X_1$ and the odd part $X_0$.
The inverse of $\alpha$ is implemented by $(-X, \Ad U \circ \phi, U F U^*)$, where $U :  X \to -X$ is the natural unitary. 

For any $*$-homomorphism $\phi : A \to B$, we have $(B \oplus 0, \phi \oplus 0, 0 ) \in \lE (A, B)$ and still denote by $\phi$ the corresponding element in $KK (A, B)$. 

For $\alpha \in  KK(A,B)$ and $\gamma  \in KK (B, C)$, the \emph{Kasparov product} of $\alpha$ and $\gamma$ is denoted by $\gamma \circ \alpha$ (or $\alpha \otimes_B \gamma$).
When one of $\alpha$ and $\beta$ comes from a $*$-homomorphism, the construction of the Kasparov product is very simple (and we will use Kasparov products only in these special cases).
Indeed, if $\gamma$ comes from a $*$-homomorphism $\gamma : B \to C$ with $[ \gamma (B) C] =C$ and $\alpha$ is implemented by $(X, \phi, F)$,
then the Kasparov product $\gamma \circ \alpha$ is implemented by $( X \otimes_\gamma C, \phi \otimes_\gamma 1_C, F \otimes_\gamma 1_C)$.
Similarly, when $\alpha$ is a $*$-homomorphism from $A$ into $B$ and $\gamma$ is implemented by $(Y, \psi, G)$ with $[\psi ( B) Y] = Y$,
the Kasparov product $\gamma \circ \alpha$ is implemented by $(Y, \psi \circ \alpha, G)$.

\begin{definition}
An element $\alpha \in KK (A, B)$ is said to be a $KK$-{\it equivalence} if there exists $\beta \in KK( B, A)$ such that $\id_A =\beta \circ \alpha $ and $\id_{B} = \alpha \circ \beta$.
In this case, $A$ and $B$ are said to be \emph{$KK$-equivalent}.
\end{definition}

Note that $KK$-equivalence between $A$ and $B$ implies $KK (A, C) \cong KK (B, C)$ and $KK (C, A) \cong KK ( C, B)$ for any separable $\rmC^*$-algebra $C$.

Finally, we recall the notion of $K$-nuclearity in the sense of Skandalis \cite{Skandalis}.

\begin{theorem}[{\cite[Theoreme 1.5]{Skandalis}}] \label{thm-nuclear}
Let $A$ and $B$ be separable $\rmC^*$-algebras and let $\pi : A \to \lB (H)$ be a faithful and essential representation on a separable Hilbert space $H$.
For a given $A$-$B$ $\rmC^*$-correspondence $(X, \sigma)$ with $X$ countably generated,
the following are equivalent:
\begin{itemize}
\item[(i)] For any unit vector $\xi \in X$ the c.c.p.~(completely contractive positive) map $A \ni a \mapsto \i< \xi, \sigma ( a) \xi > \in B$ is nuclear.
\item[(ii)] For any $x \in \lK_B ( X)$ of norm 1, the c.c.p.~map $A \ni a \mapsto x^* \sigma (a) x \in \lK_B (X)$ is nuclear.
\item[(iii)] There exists a sequence of isometries $V_n \in \lL_B (X, H \otimes B)$ such that $\sigma ( a) - V_n^* (\pi ( a ) \otimes 1_A) V_n \in \lK_B (X)$ and $\lim_{n \to \infty} \| \sigma ( a) - V_n^* (\pi ( a ) \otimes 1_A) V_n \| =0$ for all $a \in A$.
\end{itemize}
When any of these three conditions holds, we say that $(X, \sigma)$ is \emph{nuclear}.
\end{theorem}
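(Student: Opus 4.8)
The plan is to prove the cyclic chain of implications (i) $\Rightarrow$ (ii) $\Rightarrow$ (iii) $\Rightarrow$ (i), the first and last being soft and the middle one carrying all the weight. The common technical device is the following \emph{building block}: for any $\eta \in H \otimes B$ the c.c.p.~map $a \mapsto \i<\eta, (\pi(a)\otimes 1)\eta>$ into $B$, and more generally for any $w \in \lK_B(X, H\otimes B)$ the c.p.~map $a \mapsto w^*(\pi(a)\otimes 1)w$ into $\lK_B(X)$, are nuclear. First I would verify this by approximating $\eta$ (resp.\ $w$) by finite sums of elementary tensors $e_k \otimes b_k$ with $e_k$ in a fixed orthonormal basis of $H$; the resulting map then factors as $A \to M_n(\lC) \to B$ (resp.\ $\to \lK_B(X)$), where the first leg is the finite compression $a \mapsto [\i<e_j, \pi(a)e_k>]_{j,k}$, a c.c.p.~map into a matrix algebra, and the second leg is a fixed c.p.~map; since nuclear maps are closed under point-norm limits, the claim follows. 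Note that faithfulness and essentiality of $\pi$ play no role here.

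For (iii) $\Rightarrow$ (i) I would write $\i<\xi, \sigma(a)\xi> = \i<V_n\xi, (\pi(a)\otimes 1)V_n\xi> + \i<\xi, (\sigma(a) - V_n^*(\pi(a)\otimes 1)V_n)\xi>$; the first summand is nuclear by the building block (with $\eta = V_n\xi$) and the second is dominated in norm by $\|\sigma(a) - V_n^*(\pi(a)\otimes 1)V_n\|$, which tends to $0$. Hence $a \mapsto \i<\xi, \sigma(a)\xi>$ is a point-norm limit of nuclear maps, so it is nuclear. For (i) $\Rightarrow$ (ii) it suffices, by density of finite-rank operators in $\lK_B(X)$ and closedness of nuclearity under point-norm limits, to treat $x = \sum_{k=1}^n \theta_{\xi_k, \eta_k}$. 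A direct computation gives $x^*\sigma(a)x = W^* \Phi(a) W$, where $\Phi(a) = [\i<\xi_j, \sigma(a)\xi_k>]_{j,k} \in M_n(B)$ and $W \in \lL_B(X, B^n)$ is $W\zeta = (\i<\eta_k, \zeta>)_k$. The map $\Phi$ is c.p.\ with nuclear diagonal entries, since each $\i<\xi_i,\sigma(a)\xi_i>$ is nuclear by (i), hence $\Phi$ is nuclear by the standard matrix lemma controlling the off-diagonal entries of a c.p.\ map through its diagonal; composing the nuclear map $\Phi$ with the c.p.\ map $T \mapsto W^*TW$ preserves nuclearity, giving (ii).

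The crux is (ii) $\Rightarrow$ (iii), where the isometries $V_n$ must be manufactured, and this is where faithfulness and essentiality of $\pi$ enter decisively. Using Kasparov's stabilization theorem I would fix a frame $(\xi_i)_i$ for $X$ (so that $\sum_i \theta_{\xi_i, \xi_i}$ converges strictly to $1_X$) and read condition (ii) as saying that the c.p.\ maps $a \mapsto x^*\sigma(a)x$ admit approximate factorizations $A \to M_k \to \lK_B(X)$ through matrix algebras. I would then apply Kasparov's Stinespring (KSGNS) construction to dilate $\sigma$ and, crucially, invoke a Voiculescu-type absorption theorem for Hilbert $\rmC^*$-modules: since $\pi$ is faithful and essential, the amplified representation $\pi \otimes 1_B$ is \emph{absorbing} modulo $\lK_B(\,\cdot\,)$, so that the matrix factorizations provided by nuclearity can be assembled into isometries $V_n \in \lL_B(X, H\otimes B)$ with $\sigma(a) - V_n^*(\pi(a)\otimes 1)V_n \in \lK_B(X)$ and $\|\sigma(a) - V_n^*(\pi(a)\otimes 1)V_n\| \to 0$. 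I expect this absorption step to be the main obstacle: it requires upgrading the abstract, $a$-dependent matrix approximations to genuine isometries into the single fixed module $H\otimes B$, uniformly over a dense set of $a$, and it is exactly here that nuclearity, rather than mere complete positivity, together with the essentiality of $\pi$, is indispensable. Closing the cycle then completes the proof.
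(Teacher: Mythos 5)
First, a point of order: the paper contains no proof of this statement --- it is quoted, with attribution, from Skandalis [Th\'eor\`eme 1.5], so your attempt can only be compared against the argument in that reference, which rests on Kasparov's generalized Stinespring and Voiculescu theorems for Hilbert $\rmC^*$-modules.

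Your soft implications are essentially sound. The building block (nuclearity of $a \mapsto \langle \eta, (\pi(a)\otimes 1)\eta\rangle$ for $\eta \in H \otimes B$, via elementary-tensor approximation and point-norm closure of nuclear maps) is correct, as is (iii) $\Rightarrow$ (i), and your identity $x^*\sigma(a)x = W^*\Phi(a)W$ for finite-rank $x$ is right. One caveat on (i) $\Rightarrow$ (ii): the ``standard matrix lemma'' you invoke is true but not as innocent as your phrasing suggests. Polarization expresses the off-diagonal entries $\varphi_{jk}$ as \emph{linear combinations} of nuclear c.p.\ maps, but linear combinations destroy complete positivity of the approximating factorizations, so this alone does not make $\Phi$ nuclear. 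The usual route is: the Cauchy--Schwarz estimate gives $\Phi \leq n\,\mathrm{Diag}\,\Phi$ in the c.p.\ order (this inequality survives all matrix amplifications), and one then needs the genuinely nontrivial fact that a c.p.\ map dominated by a nuclear c.p.\ map is nuclear --- proved by applying Paschke's Radon--Nikodym theorem inside the KSGNS dilation of the dominating map, where the dominated map becomes a vector map and is approximated by compressions of matrix amplifications of the nuclear map. You should state and use that domination lemma explicitly rather than gesture at ``controlling off-diagonals through the diagonal.''

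The genuine gap is (ii) $\Rightarrow$ (iii), and it lies exactly where you feared. Your key assertion --- ``since $\pi$ is faithful and essential, the amplified representation $\pi \otimes 1_B$ is absorbing modulo $\lK_B(\,\cdot\,)$'' --- is false in the generality you need, and invoking it here is circular: Kasparov's generalized Voiculescu theorem yields absorption of \emph{arbitrary} correspondences by $\pi \otimes 1_B$ only when $A$ is nuclear, and for general separable $A$ the correspondences absorbed by $\pi \otimes 1_B$ are precisely the nuclear ones --- which is the very statement (ii) $\Leftrightarrow$ (iii) to be proved. What the actual proof must do (and what Skandalis does, adapting Kasparov's machinery) is manufacture the $V_n$ by hand from the factorizations supplied by (ii): realize each approximate factorization $A \to M_k \to \lK_B(X)$ in Stinespring form; use faithfulness and essentiality of $\pi$, via a Glimm/Voiculescu-type lemma, to implement the matrix leg approximately by finite-rank compressions inside $H$; then patch these finite pieces together along a quasicentral approximate unit and an increasing sequence of finite subsets of a dense set of $a$'s, exploiting the infinite multiplicity $H \otimes B \cong \bigoplus_{n \geq 1} (H \otimes B)$ to convert approximate intertwiners into genuine isometries with compact and norm-small defect. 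None of this mechanism appears in your proposal; as written, the cycle of implications is broken at its only hard edge, with the conclusion assumed in the guise of an absorption theorem that does not apply.
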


Note that any $\rmC^*$-correspondence of the form $(X \otimes_B Y, \pi_X \otimes_B 1_Y)$ is nuclear whenever $B$ is nuclear (see e.g.~\cite[Remark 2.11]{Hasegawa}).

\begin{definition}
A separable $\rmC^*$-algebra $A$ is said to be \emph{$K$-nuclear} if $\id_A$ in $KK (A, A)$ is implemented by a Kasparov bimodule $(X, \phi, F)$ such that $(X, \phi)$ is nuclear.
\end{definition}

\subsection{Amalgamated free products}

Let $\{ B \subset A_i \}_{ i \in \calI}$ be a family of inclusions of $\rmC^*$-algebras.
Recall that the \emph{full amalgamated free product} of $\{ A_i \}_{ i \in \calI}$ over $B$ is
a $\rmC^*$-algebra $\frakA$ generated by the images of injective $*$-homomorphisms $f_i : A_i \hookrightarrow \frakA$ such that $f_i |_B = f_j |_B$ for $i,j \in \calI$
and satisfying the following universal property:
for any $\rmC^*$-algebra $C$ and $*$-homomorphisms $\pi_i : A_i \to C$ satisfying $\pi_i |_B = \pi_j|_B$ for $i,j \in \calI$,
there exists a unique $*$-homomorphism $\bigstar_{ i \in \calI} \pi_i : \frakA \to C$ such that $(\bigstar_{ i \in \calI} \pi_i  )\circ f_i = \pi_i$ for $i \in \calI$.
Since the full amalgamated free product is unique up to isomorphism, we denote it by $\bigstar_{B, i \in \calI} A_i$.
We identify $A_i$ with $f_i (A_i)$ so that $A_i \subset \bigstar_{B, i \in \calI} A_i$ for every $i \in \calI$.

\medskip
Further assume that, the inclusion $B \subset A_i$ is quasi-unital and there exists a nondegenerate conditional expectation $E^{A_i}_B : A_i \to B$ for each $i \in \calI$.
In \cite{Voiculescu}, Voiculescu introduced reduced amalgamated free products of unital inclusions of $\rmC^*$-algebras with conditional expectations.
To reduce Theorem \ref{thm-KK} to the case when $\calI = \{ 1,2\}$, we need to extend Voiculescu's definition to quasi-unital inclusions.
For any $m \in \lN$, set $\calI_m:= \{ \iota : \{ 1, \dots, m \} \to \calI \mid \iota (k) \neq \iota (k+1) \text{ for }1 \leq k \leq m-1 \}$.
Recall that the \emph{reduced amalgamated free product} of $\{ (A_i , E^{A_i}_B  ) \}_{ i \in \calI}$ is a pair $(A, E)$ such that
\begin{itemize}
\item  $A$ is a $\rmC^*$-algebra generated by the images of injective $*$-homomorphisms $g_i : A_i \hookrightarrow A$ such that $g_i |_B = g_j |_B$ for $i, j \in \calI$;
\item $E$ is a nondegenerate conditional expectation from $A$ onto $g_i ( B)$ (independent of $i$);
\item one has $E ( g_{\iota (1)} (x_1) g_{\iota (2)} ( x_2) \cdots g_{\iota (m)} (x_m) ) =0$ for any $m \geq 1$, $\iota \in \calI_m$, and $x_k \in \ker E^{A_{\iota (k)}}_{B}$ for $1 \leq k \leq m$.
\end{itemize}
We will also identify $A_i$ with $g_i (A_i)$ for every $i \in \calI$.
Since the pair $(A, E)$ is determined by the above three conditions, we will write $\bigstar_{B, \calI} (A_i, E^{A_i}_B) := (A, E)$.
Clearly, we have a canonical surjection $\lambda : \frakA \to A$ satisfying that $\lambda \circ f_i = g_i$ for every $i \in \calI$.

\medskip
We recall the construction of $(A, E)$ (\cite{Voiculescu}).
Let $(X_i, \pi_{X_i}, \xi_i )$ be the GNS-representation associated with $E^{A_i}_B$ (see \S\S \ref{ss-cor}) and set $A_i^\circ:= \ker E^{A_i}_B$ and $X_i^\circ = X_i \ominus \xi_i B = [ \pi_{X_i} ( A_i^\circ ) \xi_i ]$ for $i \in \calI$.
Recall that the free product of $\{  (X_i, \xi_i ) \}_{i \in \calI}$ is the Hilbert $B$-module $(X, \xi_0 )$ defined to be
$\xi_0 B \oplus \bigoplus_{m \geq 1} \bigoplus_{ \iota \in \calI _m } X_{\iota ( 1)}^\circ \otimes_B \cdots \otimes_B X_{\iota (m)}^\circ$,
where we define $\i< \xi_0 b , \xi_0 c > = b^* c$ for $b,c \in B$.
We identify $X_i $ with $\xi_0 B \oplus X_i^\circ$ so that $X_i \subset X$.
For each $i \in \calI$, we consider the following submodules:
\begin{align*}
X (\ell,i) &:=\xi_0 B \oplus \bigoplus_{ m \geq 1}  \us {\bigoplus_{ \iota \in \calI_m}}_{\iota (1) \neq i } X_{\iota (1)}^\circ \otimes_B \cdots  \otimes_B X_{\iota (m )}^\circ, \\
X (r, i) &:=\xi_0 B \oplus \bigoplus_{ m \geq 1} \us{\bigoplus_{ \iota \in \calI_m}}_{\iota (m) \neq i } X_{\iota (1)}^\circ \otimes_B \cdots  \otimes_B X_{\iota (m )}^\circ.
\end{align*}
Then, there is a natural unitary $V_i : X \cong X_i \otimes_B X (\ell,i )$ (see \cite{Voiculescu}).
We set $g_i:=\Ad V_i \circ (\pi_{X_i} \otimes_B 1_{X (\ell,i )} )$ and $A = \rmC^*( g_i (A_i ) \mid i \in \calI )$.
Then, the $g_i$'s agree with each other on $B$ and the compression map to $\xi_0 B$ gives the desired conditional expectation $E : A \to B$.
Note that the representation $A \subset \lL_B (X)$ with $\xi_0$ is nothing but the GNS-representation associated with $E$, and hence $E$ must be nondegenerate.

The following lemma is probably well-known, but we give its proof for the reader's convenience.
\begin{lemma}\label{lem-red}
Let $\frakA= \bigstar_{B, i \in \calI} A_i$ and $(A, E) = \bigstar_{B, i \in \calI} (A_i, E^{A_i}_B )$ be as above.
Let $C$ be a $\rmC^*$-algebra and $(Z, \pi_Z)$ be an $\frakA$-$C$ $\rmC^*$-correspondence.
Suppose that for each $i \in \calI$ there exists a subset $\calS_i \subset A_i^\circ$ generating $A_i^\circ$ as a normed space,
and there exists a cyclic subspace $\Gamma \subset Z$ (i.e., $[ \pi_Z (\frakA ) \Gamma C] = Z$ holds) which satisfies the freeness condition: for any $m \in \lN$, $\iota \in \calI_m$, $x_k \in \calS_{\iota (k)}$ for $1 \leq k \leq m$ and $\xi, \eta \in \Gamma$, one has $\i< \xi, \pi_Z ( x_1 x_2 \cdots x_m ) \eta > =0$. Then, $\pi_Z$ factors through $\lambda : \frakA \to A$.
\end{lemma}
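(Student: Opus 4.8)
The plan is to realize $Z$ as an interior tensor product built on the free-product module and to read the factorization off an intertwining unitary. Write $\Gamma_B := [\pi_Z (B)\, \Gamma\, C]$, which is a $B$-$C$ $\rmC^*$-correspondence with left action the restriction of $\pi_Z|_B$ (note $\pi_Z (B)\Gamma_B \subseteq \Gamma_B$). Let $(X, \xi_0)$ be the free product of the $(X_i, \xi_i)$ and let $\pi_E : A \to \lL_B (X)$ be the faithful GNS-representation of the reduced free product $(A, E)$ on $X$. I would construct an isometry $W : X \otimes_B \Gamma_B \to Z$ that intertwines $\pi_E \circ \lambda$ with $\pi_Z$, prove it is surjective using cyclicity of $\Gamma$, and conclude $\pi_Z = \Ad W \circ \big( (\pi_E \otimes_B 1_{\Gamma_B}) \circ \lambda \big)$, which is exactly the desired factorization through $\lambda$.

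Before constructing $W$, I would record two routine extensions of the freeness hypothesis. First, since each $\calS_i$ has dense linear span in $A_i^\circ$ and the map $(a_1, \dots, a_m) \mapsto \i<\xi, \pi_Z (a_1 \cdots a_m) \eta>$ is bounded and multilinear, the freeness relation $\i<\xi, \pi_Z (a_1 \cdots a_m) \eta> = 0$ extends from $a_k \in \calS_{\iota (k)}$ to all $a_k \in A_{\iota (k)}^\circ$ (with $\iota \in \calI_m$, $\xi, \eta \in \Gamma$). Second, I would upgrade $\Gamma$ to $\Gamma_B$: for $\zeta = \pi_Z (b) \eta c$, $\zeta' = \pi_Z (b') \eta' c'$ with $\eta, \eta' \in \Gamma$ and a reduced word $w = w_1 \cdots w_m$ ($w_k \in A_{\iota (k)}^\circ$), one has $\i<\zeta, \pi_Z (w) \zeta'> = c^* \i<\eta, \pi_Z \big( (b^* w_1) w_2 \cdots w_{m-1} (w_m b') \big) \eta'> c'$, and $(b^* w_1) w_2 \cdots (w_m b')$ is again a reduced word because $B A_{\iota (k)}^\circ, A_{\iota (k)}^\circ B \subseteq A_{\iota (k)}^\circ$; hence the right-hand side vanishes, and by density $\i<\zeta, \pi_Z (w) \zeta'> = 0$ for all $\zeta, \zeta' \in \Gamma_B$.

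The heart of the matter is the definition and isometry of $W$. On elementary tensors I set $W\big( (a_1 \xi_{\iota (1)} \otimes_B \cdots \otimes_B a_m \xi_{\iota (m)}) \otimes \zeta \big) = \pi_Z (a_1 \cdots a_m) \zeta$ and $W(\xi_0 b \otimes \zeta) = \pi_Z (b) \zeta$, where $a_k \in A_{\iota (k)}^\circ$ and $\zeta \in \Gamma_B$. To see that $W$ is simultaneously well defined and isometric I would compare inner products of two such vectors $v, w$, putting $P := a_m^* \cdots a_1^* a_1' \cdots a_{m'}' \in \frakA$. The inner product of $Wv, Ww$ in $Z$ equals $\i<\zeta, \pi_Z (P) \zeta'>$, while, using the standard identity $g_{\iota (1)} (a_1) \cdots g_{\iota (m)} (a_m)\, \xi_0 = a_1 \xi_{\iota (1)} \otimes_B \cdots \otimes_B a_m \xi_{\iota (m)}$ in $X$, the inner product of $v, w$ in $X \otimes_B \Gamma_B$ equals $\i<\zeta, \pi_Z (b_P) \zeta'>$ with $b_P := E(\lambda (P)) \in B$. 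The key computation is then to expand $P$ inside $\frakA$, contracting the innermost pair $a_1^* a_1'$ (when $\iota (1) = \iota' (1) = i$, split $a_1^* a_1' = E^{A_i}_B (a_1^* a_1') + \big( a_1^* a_1' - E^{A_i}_B (a_1^* a_1') \big)$ and absorb the $B$-factor into the neighbouring letter) and iterating; this rewrites $P - b_P$ as a finite sum of reduced words with letters in the $A_j^\circ$. By the two extensions above, $\i<\zeta, \pi_Z (P - b_P) \zeta'> = 0$ for $\zeta, \zeta' \in \Gamma_B$, so the two inner products coincide and $W$ extends to an isometry of Hilbert $C$-modules. I expect this reduction to be the main obstacle, as it is where the freeness hypothesis and the recursive structure of the amalgamated free-product inner product must be matched precisely.

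Finally I would check the intertwining $\pi_Z (f_i (a))\, W = W\, (g_i (a) \otimes_B 1_{\Gamma_B})$ for $a \in A_i$ on elementary tensors (splitting $a = E^{A_i}_B (a) + a^\circ$ and distinguishing whether the leading index equals $i$), exactly as in the computation defining the reduced free product. Consequently the range of $W$ is closed, is $\pi_Z (\frakA)$-invariant, and contains $\Gamma_B \supseteq \Gamma$ (indeed $W(\xi_0 e_\nu \otimes \zeta) = \pi_Z (e_\nu) \zeta \to \zeta$ along an approximate unit $(e_\nu)$ of $B$, which is an approximate unit for $\frakA$, while nondegeneracy of $\pi_Z$ on $Z$ gives $\Gamma \subseteq \Gamma_B$); by cyclicity of $\Gamma$ the range is all of $Z$, so $W$ is unitary. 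Transporting through $W$ and using $\lambda \circ f_i = g_i$ together with the universal property of $\frakA$ yields $\pi_Z = \Ad W \circ \big( (\pi_E \otimes_B 1_{\Gamma_B}) \circ \lambda \big)$. Hence $\pi_Z$ factors through $\lambda$, as required.
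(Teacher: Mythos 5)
Your proposal is correct, but it takes a genuinely different route from the paper's. The paper argues directly that $\ker \lambda \subseteq \ker \pi_Z$: given $z \in \ker \lambda$ and algebraic approximants $z_n$, it writes $x z_n y = b_n + (\text{finite sum of reduced words})$ with $b_n = E(\lambda( x z_n y)) \in B$, kills the reduced-word terms by the freeness condition (extended, as in your first routine step, from $\calS_i$ to $A_i^\circ$ by multilinearity and density), and then estimates $\| \langle \xi, \pi_Z (x z y) \eta \rangle \| = \lim_n \| \langle \xi, \pi_Z (b_n) \eta \rangle \| \leq \limsup_n \| \xi \| \, \| \eta \| \, \| E ( \lambda( x z_n y) ) \| = 0$ because $\lambda (z) = 0$; cyclicity of $\Gamma$ then yields $\pi_Z (z) = 0$. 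You instead prove a stronger structural statement: a unitary $W : X \otimes_B \Gamma_B \to Z$ intertwining $(\pi_E \otimes_B 1_{\Gamma_B}) \circ \lambda$ with $\pi_Z$, i.e., every correspondence carrying a cyclic subspace satisfying the freeness condition is unitarily equivalent to one induced from the GNS module of $(A, E)$ with coefficients in $\Gamma_B$ --- essentially the uniqueness half of Voiculescu's construction, relativized to a coefficient correspondence. The algebraic core is identical in both arguments: your contraction identity rewriting $P - E(\lambda (P))$ as a finite sum of reduced words with letters in the $A_j^\circ$ is exactly the paper's decomposition of $x z_n y - b_n$, so the ``main obstacle'' you flag is real but surmountable by the same purely algebraic computation the paper invokes. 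What the paper's route buys is brevity: no well-definedness, density, intertwining, or surjectivity bookkeeping, and no need to introduce $\Gamma_B$. What yours buys is the explicit identification $\pi_Z = \Ad W \circ ((\pi_E \otimes_B 1_{\Gamma_B}) \circ \lambda)$, from which the factorization is immediate and which gives extra information (e.g., it exhibits $(Z, \pi_Z \circ{} \cdot{})$ as an induced correspondence, in the spirit of the identifications used later in the paper's Theorem 3.4). One small point to tidy: nondegeneracy of $\pi_Z$, which you invoke to get $\Gamma \subseteq \Gamma_B$, is not part of the paper's definition of a $\rmC^*$-correspondence, but it is automatic here --- cyclicity of $\Gamma$ together with the fact that an approximate unit $(e_\nu)$ of $B$ is an approximate unit for $\frakA$ (by quasi-unitality of each $B \subseteq A_i$) forces $\pi_Z (e_\nu) \to 1$ strongly on $Z$ --- so you should state this one-line justification rather than assume it.
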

\begin{proof}
We will show that $\ker \lambda \subset \ker \pi_Z$.
Choose and fix $z \in \ker \lambda$ arbitrarily.
By assumption, it suffices to show that $\i< \xi, \pi_Z ( x z y) \eta > =0$ for all $x, y \in \frakA$ and $\xi, \eta \in \Gamma$.
We may assume that $x$ and $y$ are in $\ast$-$\alg ( \bigcup_{i\in \calI } A_i )$.
Take a sequence $z_n$ in $\ast$-$\alg ( \bigcup_{i\in \calI } A_i )$ such that $\lim_{n \to \infty }\| z - z_n \| = 0$.
For each $n \geq 1$ there exists $b_n \in B$ such that $x z_n y - b_n$ is a sum of some elements of the form $x_1 \cdots x_m$ for some $m \geq 1$, $\iota \in \calI_m$ and $x_k \in A_{\iota (k)}^\circ$ for $1 \leq k \leq m$ so that $b_n = E ( x z_n y)$.
The assumption on $\Gamma$ implies that $\i< \xi, \pi_Z ( x z_n y - b_n ) \eta > =0$, and hence we have $\| \i< \xi, \pi_Z ( x z y )  \eta > \| =\lim_{n \to \infty } \| \i< \xi, \pi_Z ( b_n ) \eta > \| \leq \limsup_{n \to \infty } \| \xi \| \| \eta \| \| E ( x z_n y ) \| = 0$.
 \end{proof}
\section{Proof}

\subsection{Case of two free components}
We first deal with the case when $\calI = \{ 1, 2\}$.
Let $(A, E) =(A_1, E^{A_1}_B) \star_B (A_2, E^{A_2}_B )$, $\frakA=A_1 \star_B A_2$ and $\lambda : \frakA \to A$ be as in Theorem \ref{thm-KK}.
As in the previous section, let $(X, \pi_X, \xi_0 )$ be the GNS-representation associated with $E$ and identify $X_i :=L^2(A_i, E^{A_i}_B)$ with $\xi_0 B \oplus X_i^\circ$ for $i = 1,2$.
Let $E_{A_i} : A \to A_i$ be the canonical conditional expectation given by the compression map to $X_i$ and let $(Y_i, \pi_{Y_i}, \eta_i )$ be the GNS-representation associated with $E_{A_i}$ for $i =1, 2$.
Note that any vector of the form $\xi_0 \otimes a $ with $a \in A_i$ sits in $X (r, i ) \otimes_B A_i$ for each $i \in \calI$ thanks to the assumption that $B \subset A$ is quasi-unital.
The following lemma can be shown in the same manner as \cite[Lemma 3.1]{Vergnioux}, but we give its proof for the reader's convenience.

\begin{lemma}
The exists a unitary $S_i:   X(r, i) \otimes_B A_i \to Y_i$ satisfying that $S_i (\xi_0 \otimes y ) = \eta_i y$ and $ S_i ( x_1 \cdots x_m \xi_0 \otimes y ) = x_1 \cdots x_m  \eta_i y  $ for all $y \in A_i$ and $m \in \lN $, $\iota \in \calI_m$ with $\iota ( m) \neq i $, and $x_k \in A_{\iota (k)}^\circ$ for $1 \leq k \leq m$.
\end{lemma}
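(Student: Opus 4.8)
The plan is to construct $S_i$ first on a dense subspace by the prescribed formulas, to check that it preserves the $A_i$-valued inner products (so that it is well defined and isometric), and finally to check that it has dense range; a densely defined isometry with dense range extends uniquely to a unitary. Concretely, let $\calD \subset X(r,i)\otimes_B A_i$ be the linear span of the vectors $\xi_0\otimes y$ and $x_1\cdots x_m\xi_0\otimes y$ with $y\in A_i$, $m\in\lN$, $\iota\in\calI_m$, $\iota(m)\neq i$ and $x_k\in A_{\iota(k)}^\circ$. Since the vectors $x_1\cdots x_m\xi_0$ together with $\xi_0 B$ span a dense subspace of $X(r,i)$, the subspace $\calD$ is dense. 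Writing $w$ for such a word (the empty word corresponding to $\xi_0$), I define $S_i(w\xi_0\otimes y):=\pi_{Y_i}(w)\eta_i y=\pi_{Y_i}(wy)\eta_i$, i.e.\ the class of $wy\in A$ in $Y_i=L^2(A,E_{A_i})$; this is exactly the formula in the statement.

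For the isometry, note that $\langle w\xi_0,w'\xi_0\rangle=\langle \pi_X(w)\xi_0,\pi_X(w')\xi_0\rangle=E(w^*w')$ in $X$, so by the definition of the interior tensor product one has $\langle w\xi_0\otimes y,\,w'\xi_0\otimes y'\rangle=y^*E(w^*w')y'$, while on the $Y_i$ side $\langle \pi_{Y_i}(wy)\eta_i,\,\pi_{Y_i}(w'y')\eta_i\rangle=E_{A_i}(y^*w^*w'y')=y^*E_{A_i}(w^*w')y'$. Hence it suffices to prove the key identity $E_{A_i}(w^*w')=E(w^*w')$ for all admissible $w,w'$.

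The key identity is the crux. Since $E=E^{A_i}_B\circ E_{A_i}$ and $E^{A_i}_B$ restricts to the identity on $B$, the identity is equivalent to the assertion $E_{A_i}(w^*w')\in B$. To see this, expand $w^*w'=x_m^*\cdots x_1^*x_1'\cdots x_{m'}'$ and reduce the product in $A$: performing the Voiculescu reduction at the interface $x_1^*x_1'$, the element $w^*w'$ becomes the sum of finitely many reduced words, each beginning with a letter in the component $\iota(m)\neq i$ (the outermost left letter $x_m^*$ survives any contraction, possibly multiplied on the right by a $B$-element), together with a (possibly zero) term in $B$. It then remains to observe that $E_{A_i}$ annihilates every reduced word $z$ whose first letter lies in a component $j\neq i$: for any $\zeta,\zeta'\in X_i=\xi_0 B\oplus X_i^\circ$ the vector $\pi_X(z)\zeta'$ has its leftmost letter in that component $j\neq i$, hence is orthogonal to $X_i$, so the $X_i$-compression of $\pi_X(z)$ vanishes and $E_{A_i}(z)=0$. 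Consequently $E_{A_i}(w^*w')$ coincides with its $B$-part, which proves the identity. This reduction step (and the vanishing of $E_{A_i}$ on reduced words not beginning in component $i$) is the main obstacle; everything else is bookkeeping, and in particular no faithfulness of $E^{A_i}_B$ beyond the assumed nondegeneracy is needed.

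For density, the range of $S_i$ contains every vector $\pi_{Y_i}(wy)\eta_i$ with $w$ admissible and $y\in A_i$. Taking $y\in A_i^\circ$ yields all classes of reduced words ending in component $i$, and taking $w=\xi_0$, $y=b\in B$ yields the classes $\pi_{Y_i}(b)\eta_i$. For a word $w$ ending in a component $\neq i$, choosing an approximate unit $u_\lambda$ of $A_i$ and using that $E_{A_i}$ is $A_i$-bimodular together with $E_{A_i}(w^*w)=E(w^*w)\in B$, one gets $\|\pi_{Y_i}(wu_\lambda)\eta_i-\pi_{Y_i}(w)\eta_i\|^2=\|(1-u_\lambda)E(w^*w)(1-u_\lambda)\|\to 0$, so the closure of the range also contains the classes of all reduced words ending $\neq i$. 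As $A$ is densely spanned by $B$ and reduced words, the range is dense in $Y_i$. Therefore $S_i$ is an inner-product preserving map of $\calD$ onto a dense subspace of $Y_i$, and it extends to the desired unitary $S_i\colon X(r,i)\otimes_B A_i\to Y_i$ satisfying the two displayed formulas.
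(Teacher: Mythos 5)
Your overall architecture is sound and is essentially the paper's: define $S_i$ on the obvious dense subspace, reduce well-definedness and isometry to the key identity $E_{A_i}(w^*w')=E(w^*w')$ for admissible words, and get unitarity from density of the range (your approximate-unit argument for words ending in a component $\neq i$, and your use of $E=E^{A_i}_B\circ E_{A_i}$ to reduce the identity to $E_{A_i}(w^*w')\in B$, are both correct). However, the word-reduction step contains a false claim: \emph{``the outermost left letter $x_m^*$ survives any contraction.''} It does not survive when the telescoping consumes $w^*$ entirely, which is possible when $m<m'$ and the components match at every stage. Concretely, take $\calI=\{1,2\}$, $i=1$, $w=x_1$ with $x_1\in A_2^\circ$ (so $w$ ends in component $2\neq i$, as required), and $w'=x_1'x_2'x_3'$ with $x_1'\in A_2^\circ$, $x_2'\in A_1^\circ$, $x_3'\in A_2^\circ$ (ending in component $2\neq i$). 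Reducing $w^*w'=x_1^*x_1'x_2'x_3'$ at the interface gives $\bigl(x_1^*x_1'-b\bigr)x_2'x_3' + (bx_2')x_3'$ with $b=E^{A_2}_B(x_1^*x_1')\in B$; the second term is a reduced word whose \emph{first} letter $bx_2'$ lies in $A_i^\circ$, so your vanishing lemma (``first letter in a component $j\neq i$'') does not apply to it. The same defect appears already in the mixed inner products $\langle \xi_0\otimes y,\,w'\xi_0\otimes y'\rangle$ (the empty-word case), since a reduced $w'$ may perfectly well \emph{begin} in component $i$; only its last letter is constrained.

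The gap is repairable, because the true vanishing statement is stronger than the one you invoke: $E_{A_i}$ annihilates \emph{every} reduced word except a single letter from $A_i^\circ$; in particular all reduced words of length $\geq 2$ are killed regardless of their first letter. (The proof is the compression computation you sketch, pushed one step further: a length-$k$ word applied to $X_i=\xi_0B\oplus X_i^\circ$ produces reduced tensors of length $\geq k-1$, and the unique possible length-one output lies in a component $\neq i$ because the word alternates.) One then checks that every word surviving your telescoping of $w^*w'$ either has length $\geq 2$, or is a single letter in $A^\circ_{\iota(m)}$ or $A^\circ_{\iota'(m')}$ — components $\neq i$ by hypothesis on both words — so $E_{A_i}(w^*w')$ indeed equals its $B$-part. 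With this patch your argument becomes a correct, non-inductive alternative to the paper's proof, which instead establishes $E_{A_i}(x^*x)=E(x^*x)$ by polarization and induction on word length, splitting $x_1^*x_1=E(x_1^*x_1)+\bigl(x_1^*x_1-E(x_1^*x_1)\bigr)$ at the inner interface; the induction quietly sidesteps the global reduction bookkeeping, which is exactly where your version slipped.
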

\begin{proof}
Note that if $S_i$ is bounded, then it must be surjective.
Thus, it suffices to show that $S_i$ is an isometry.
By the polarization trick, we only have to verify that $E_{A_i} ( x^*x) = E ( x^* x)$ for all $x = x_1 \cdots x_m$ with $m \in \lN$, $\iota \in \calI_m$, $\iota (m) \neq i$ and $x_k \in A_{ \iota (k)}^\circ$ for $1 \leq k \leq m$.
When $m=1$, this follows from the fact that $E_{A_i}$ is given by the compression to $X_i$.
Assume that we have shown for $k=1, \dots, m$.
For $\iota \in \calI_{m+1}$ with $\iota ( m+1) \neq i$, take $x_k \in A^\circ_{\iota (k)}$ arbitrarily.
If we put $y=x_2 \cdots x_{m+1}$ and $b=E(x_1^* x_1)$, then the induction hypothesis implies that $E_{A_i} (y^*b y ) = E( y^*b y)$.
Thus, we have
$E_{A_i} ( x^*x ) = E_{A_i} ( y^* E ( x_1^*x_1 ) y ) + E_{A_i} ( y^* ( x_1^* x_1 - E ( x_1^* x_1) ) y ) = E_{A_i} ( y^*  by ) =E (y^* by ) = E ( y^* E ( x_1^*x_1 ) y ) + E ( y^* ( x_1^* x_1 - E ( x_1^* x_1) ) y ) = E ( x^*x).$
Hence, we are done.
\end{proof}

Consider two $A$-$\frakA$ $\rmC^*$-correspondences $(Z^+, \pi^+):= ( X \otimes_B \frakA, \pi_X  \otimes_B 1 )$ and $(Z^-, \pi^-):= \bigoplus_{i=1}^2 (Y_i \otimes_{A_i} \frakA, \pi_{Y_i}  \otimes_{A_i} 1)$.
Notice that the vector $\zeta_i := \eta_i \otimes 1_\frakA $ is not necessarily in $Z^-$, but one has $\zeta_i \frakA \subset Z^-$.
Define the isometry $S : Z^+ \to Z^-$ by
\[
\begin{cases}
S_1 \otimes_{A_1} 1 :  X (r, 1)^\circ \otimes_B \frakA \to Y_1^\circ \otimes_{A_1} \frakA ; \\
S_2 \otimes_{A_2} 1 :  X (r, 2 ) \otimes_B \frakA \to Y_2 \otimes_{A_2} \frakA .
\end{cases}
\]
\begin{lemma}[{c.f.~\cite[Theorem 3.3 (2)]{Vergnioux}}]
The operator $S$ satisfies that $\ker S^* = \zeta_1 \frakA$ and $\pi^- (a) S - S \pi^+ (a)$ is compact for all $a \in A$.
Consequently, the triple $(Z^+ \oplus Z^-, \pi^+ \oplus \pi^-, \left[ \begin{smallmatrix} 0 &S^* \\ S & 0 \end{smallmatrix} \right] )$ is an $A$-$\frakA$ Kasparov bimodule.
\end{lemma}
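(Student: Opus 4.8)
The plan is to verify the two asserted properties of $S$ separately and then read off the three Kasparov conditions for $F:=\left[\begin{smallmatrix}0&S^*\\S&0\end{smallmatrix}\right]$, writing $\pi:=\pi^+\oplus\pi^-$ for short.

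\emph{First, the computation of $\ker S^*$.} Since $S$ is an isometry, $\ker S^*=(\ran S)^\perp$ and $1-SS^*$ is the projection onto $\ker S^*$, so it suffices to identify $\ran S$. I would use the orthogonal decomposition $Z^+=(X(r,1)^\circ\otimes_B\frakA)\oplus(X(r,2)\otimes_B\frakA)$, which holds because $X=X(r,1)^\circ\oplus X(r,2)$: the words ending in a letter of the first component together with $\xi_0 B$ make up $X(r,2)$, while the words ending in the second component make up $X(r,1)^\circ$. On the first summand $S$ acts by $S_1\otimes_{A_1}1$, whose image is $Y_1^\circ\otimes_{A_1}\frakA$ since $S_1$ restricts to a unitary $X(r,1)^\circ\otimes_B A_1\to Y_1^\circ$; on the second summand $S$ acts by the unitary $S_2\otimes_{A_2}1$ onto $Y_2\otimes_{A_2}\frakA$. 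Hence $\ran S=(Y_1^\circ\otimes_{A_1}\frakA)\oplus(Y_2\otimes_{A_2}\frakA)$, and its orthocomplement in $Z^-$ is exactly the vacuum sector $\eta_1 A_1\otimes_{A_1}\frakA=\zeta_1\frakA$ of the first summand, giving $\ker S^*=\zeta_1\frakA$.

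\emph{Second, the intertwining modulo compacts.} I would first observe that $\{a\in A:\pi^-(a)S-S\pi^+(a)\in\lK_\frakA(Z^+,Z^-)\}$ is a norm-closed $*$-subalgebra of $A$: it is a closed selfadjoint subspace, and the identity $\pi^-(ab)S-S\pi^+(ab)=(\pi^-(a)S-S\pi^+(a))\pi^+(b)+\pi^-(a)(\pi^-(b)S-S\pi^+(b))$ shows it is closed under multiplication. Since $A=\rmC^*(A_1,A_2)$, it then suffices to check compactness for $a\in A_1\cup A_2$. For such a generator I would compute both composites on reduced tensors $x_1\cdots x_m\xi_0\otimes z$ using the explicit formula $S_i(x_1\cdots x_m\xi_0\otimes y)=x_1\cdots x_m\eta_i y$. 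The action of $a\in A_i$ splits a reduced word into a creation part, a length-preserving part, and an annihilation part (through $E^{A_i}_B$); the first two are transported by $S$ in the same way on both sides and cancel in the commutator, so only the annihilation/boundary contribution survives. The key point is that this surviving term reaches only boundedly many word-length sectors and is a \emph{finite sum of honest rank-one operators} $\theta_{\xi,\eta}$ with $\xi,\eta$ genuine vectors of $Z^\pm$, hence lies in $\lK_\frakA$. This is where care is genuinely needed, since an operator of the form $\theta\otimes 1_\frakA$ need not be compact over $\frakA$; I expect this bookkeeping of the boundary terms to be the main obstacle, and it is the step that parallels \cite[Theorem 3.3(2)]{Vergnioux}.

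\emph{Finally, the Kasparov conditions.} The operator $F$ is selfadjoint and odd, so $(F-F^*)\pi(a)=0$ for all $a$. From $S^*S=1$ one gets $1-F^2=\left[\begin{smallmatrix}0&0\\0&1-SS^*\end{smallmatrix}\right]$, where $1-SS^*$ is the projection $P$ onto $\zeta_1\frakA$ found above. The off-diagonal entries of $[F,\pi(a)]$ are $\pm(\pi^-(a)S-S\pi^+(a))$ and its adjoint, so the intertwining step gives $[F,\pi(a)]\in\lK_\frakA$. For the remaining condition I must show $P\pi^-(a)\in\lK_\frakA(Z^-)$. Taking adjoints in the intertwining relation yields $[\pi^-(a),SS^*]\in\lK_\frakA$, hence $[\pi^-(a),P]\in\lK_\frakA$ for every $a\in A$; combined with $P\pi^-(ab)=\pi^-(a)P\pi^-(b)+[P,\pi^-(a)]\pi^-(b)$ this reduces $P\pi^-(a)\in\lK_\frakA$ to the generators $a\in A_1\cup A_2$. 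For a generator the range of $P\pi^-(a)$ lies in $\zeta_1\frakA\cong\frakA$ and, by a short word-length computation, equals a left multiplication by an element of $\frakA$ (by $f_1(a)$ when $a\in A_1$, and $0$ on $\zeta_1\frakA$ when $a\in A_2^\circ$) plus finitely many honest rank-one terms; since $\lK_\frakA(\frakA)\cong\frakA$ acts by left multiplications, this is compact. Thus $(1-F^2)\pi(a)\in\lK_\frakA$ as well, and all three conditions hold, so the triple is an $A$-$\frakA$ Kasparov bimodule.
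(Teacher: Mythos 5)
Your overall frame is fine and matches the paper at the outer level: the identification $\ker S^*=(\operatorname{ran}S)^\perp=\zeta_1\frakA$ via $X=X(r,1)^\circ\oplus X(r,2)$ is correct, the reduction to generators via a closed subalgebra is legitimate (though note that selfadjointness of $\{a: \pi^-(a)S-S\pi^+(a)\in\lK_\frakA(Z^+,Z^-)\}$ is not obvious; since $A_1\cup A_2$ is $*$-closed you should work with $\{a : D(a),\,D(a^*)\in\lK_\frakA\}$ where $D(a)=\pi^-(a)S-S\pi^+(a)$), and your derivation of the three Kasparov conditions from the intertwining relation is sound. The genuine gap is at the heart of the lemma: for generators $x\in A_1\cup A_2$ you do not prove compactness of $D(x)$, you only describe a creation/length-preserving/annihilation decomposition and \emph{assert} that the surviving boundary term is ``a finite sum of honest rank-one operators,'' explicitly deferring the bookkeeping as ``the main obstacle.'' That deferred step is precisely the content of the lemma, and the assertion is moreover false as literally stated in the quasi-unital setting: for a general $x\in A_i$ the boundary terms are $\theta$-type operators one of whose legs is a multiplier-level vector such as $\zeta_i$ or $\xi_0\otimes 1_\frakA$ (which, as the paper notes, need not lie in $Z^\pm$). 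They become compact only as \emph{norm limits} of honest rank-one operators, obtained by factoring $x$ through the dense subspace $BA_iB$ (quasi-unitality) or by an approximate-unit argument, together with norm-continuity of $x\mapsto D(x)$. The same imprecision recurs in your treatment of $P\pi^-(a)$, $P=1-SS^*$; there, incidentally, you could shortcut your generator reduction: $\pi^-(a)P$ is compact for \emph{every} $a\in A$ at once because $a\zeta_1=\widehat{a}\otimes 1$ is a genuine vector of $Y_1\otimes_{A_1}\frakA$, and then $[P,\pi^-(a)]\in\lK_\frakA$ finishes.

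You also miss the device by which the paper makes the whole compactness step two lines and avoids all word-length bookkeeping. First, $S$ intertwines the $A_1$-actions \emph{exactly}: each $x\in A_1$ satisfies $xX(r,1)^\circ\subset X(r,1)^\circ$ and $xX(r,2)\subset X(r,2)$, so no compact correction appears for $A_1$ at all (your sketch treats both generators symmetrically and overlooks this). Second, defining $S'$ to agree with $S$ on $X^\circ\otimes_B\frakA$ but to send $\xi_0\otimes a\mapsto\zeta_1a$, the symmetric argument shows $S'$ intertwines the $A_2$-actions exactly; hence for $x\in A_2$ the defect $D(x)=\pi^-(x)(S-S')-(S-S')\pi^+(x)$ is localized to the single vacuum sector $[\xi_0\otimes\frakA]$. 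Compactness of these two products then follows since $\pi^-(x)(S-S')$ and $\pi^+(x^*)(S-S')^*$ have genuine-vector symbols (e.g.\ $x\zeta_i=\widehat{x}\otimes 1$), with the same quasi-unitality caveat as above in the non-unital case. So your plan could in principle be completed along Vergnioux's lines, but as written the key estimate is a declaration of intent; either carry out the boundary-term analysis with the approximation just described, or adopt the paper's $S'$ trick, which eliminates it.
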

\begin{proof}
The first assertion is obvious.
Thus, it suffices to show $\pi^- (x) S - S \pi^+ (x)$ is compact for all $x \in A_1 \cup A_2$.
In fact, since each $x \in A_1$ enjoys $x X (r, 1)^\circ \subset X ( r, 1)^\circ$ and $x X (r, 2 ) \subset X ( r, 2)$,
one has $\pi^- (x) S = S \pi^+ (x)$ for $x \in A_1$.
If we define $S' :Z^+ \to Z^-$ by $S' \xi_0 \otimes a = \zeta_1 a$ for $a \in \frakA$ and by $S$ on $X^\circ \otimes_B \frakA$,
then $S'$ intertwines the actions of $A_2$ by the above argument.
Since $S$ is a compact perturbation of $S'$, we are done.
\end{proof}

\begin{remark}
Recall that the Bass--Serre tree associated with an amalgamated free product group $G = G_1 \star_H G_2$ is the graph whose vertex and edge sets are given by $\Delta^0= G/ G_1 \sqcup G/G_2$ and $\Delta^1 =G/ H$, respectively.
Consider the unitary representations of $G$ on $\ell^2 (\Delta^0) $ and $ \ell^2 (\Delta^1)$ induced from the action of $G$ on $(\Delta^0, \Delta^1)$.
In \cite{Hasegawa}, we saw that $\rmC^*$-correspondences play a role of unitary representations for groups.
In our theory, the canonical representation $G$ on $\ell^2 (G/H)$ corresponds to $(L^2(A, E) \otimes_B \frakA, \pi_{E}\circ \lambda \otimes_B 1)$ (c.f.~\cite{Hasegawa-2}).
Thus, the $\rmC^*$-correspondences $(Z^+, \pi^+\circ \lambda)$ and $(Z^-, \pi^-\circ \lambda)$ should play a role of the Bass--Serre tree in $\rmC^*$-algebra theory.
Also, the adjoint of $S$ corresponds to the co-isometry of Julg--Valette in \cite{Julg-Valette}.
\end{remark}

Here is the main technical result of this paper.
\begin{theorem}\label{thm-KK2}
With the notation above, let $\alpha$ be the element in $KK (A, \frakA)$ implemented by $(Z^+ \oplus Z^-, \pi^+ \oplus \pi^-, \left[ \begin{smallmatrix} 0 &S^* \\ S & 0 \end{smallmatrix} \right] )$.
Then, we have $\alpha \circ \lambda  + \id_\frakA  = 0$ and $\lambda \circ \alpha  + \id_{A}= 0$.
\end{theorem}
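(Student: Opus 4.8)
The plan is to compute the two Kasparov products explicitly, to absorb the extra copies of $\frakA$ (resp.\ $A$) coming from the identity bimodules into the cokernel of $S$ by a compact perturbation, and finally to homotope the resulting representation to a degenerate one; together the two identities say that $-\alpha$ is a two-sided $KK$-inverse of $\lambda$, which is Theorem~\ref{thm-KK}. First I would write down $\alpha\circ\lambda$ and $\lambda\circ\alpha$ using the two explicit product formulas for $*$-homomorphisms recorded in Subsection~\ref{ss-KK}. Since $\lambda:\frakA\to A$ is a $*$-homomorphism, $\alpha\circ\lambda$ is implemented by $(Z^+\oplus Z^-,(\pi^+\oplus\pi^-)\circ\lambda,\left[\begin{smallmatrix}0&S^*\\S&0\end{smallmatrix}\right])$, while $\lambda\circ\alpha$ is implemented by the interior tensor product $((Z^+\oplus Z^-)\otimes_\lambda A,(\pi^+\oplus\pi^-)\otimes_\lambda 1_A,\left[\begin{smallmatrix}0&S^*\\S&0\end{smallmatrix}\right]\otimes_\lambda 1_A)$. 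Adding $\id_\frakA=(\frakA,\lambda_\frakA,0)$ (resp.\ $\id_A=(A,\lambda_A,0)$), with $\frakA$ (resp.\ $A$) placed in the even part, yields in each case a bimodule of the same shape: an even summand $Z^+\oplus\frakA$ (resp.\ its $\otimes_\lambda A$ analog), an odd summand $Z^-$, and off-diagonal operator $\mathbf{S}=(S\ 0)$.

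The geometric key is the identity $\ker S^*=\zeta_1\frakA$ from the preceding lemma: the extra copy of $\frakA$ supplied by $\id_\frakA$ is \emph{exactly} the cokernel of the isometry $S$. I would therefore glue it in, replacing $\mathbf{S}=(S\ 0)$ by $\mathbf{S}'(\xi,c):=S\xi+\zeta_1 c$, which is a unitary $Z^+\oplus\frakA\xrightarrow{\cong}Z^-$ because $\zeta_1\frakA=\ker S^*$ is the orthogonal complement of $\ran S$ and $c\mapsto\zeta_1 c$ is isometric (using that $\langle\zeta_1,\zeta_1\rangle$ is an approximate unit, as granted by quasi-unitality). The difference $\mathbf{S}'-\mathbf{S}$ is a rank-one type operator, hence lies in $\lK$, so the straight-line operator homotopy from $\mathbf{S}$ to $\mathbf{S}'$ preserves all Kasparov conditions and leaves the $KK$-class unchanged. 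After this perturbation each sum is represented by a triple $(W,\rho,\mathbf{F}')$ with $\mathbf{F}'=\left[\begin{smallmatrix}0&\mathbf{S}'^*\\\mathbf{S}'&0\end{smallmatrix}\right]$ a self-adjoint degree-one unitary, so that the only surviving condition is $[\mathbf{F}',\rho(a)]\in\lK$.

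It then suffices to homotope $\rho$, with $\mathbf{F}'$ held fixed, to the representation $\rho_0$ whose even part equals that of $\rho$ and whose odd part is $\mathbf{S}'\rho^{\mathrm{ev}}\mathbf{S}'^*$: by construction $\rho_0$ commutes with the symmetry $\mathbf{F}'$, so $(W,\rho_0,\mathbf{F}')$ is degenerate and represents $0$. For the full side ($\alpha\circ\lambda+\id_\frakA$, where the target algebra $Q=\frakA$) I would build the path directly from the universal property of the full free product: for each $t$ one specifies compatible representations of the $A_i$ agreeing on $B$, obtained by rotating the cyclic vectors $\xi_0,\eta_i,\zeta_i$ of the relevant GNS constructions into one another; this produces a strictly continuous family $\rho_t$ of honest $*$-representations interpolating $\rho$ and $\rho_0$, each a compact perturbation of an $\mathbf{F}'$-commuting representation, hence a Kasparov bimodule. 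For the reduced side ($\lambda\circ\alpha+\id_A$, where $Q=A$) there is no such universal property, so I would run the same rotation at the level of the full algebra $P=\frakA$ with $\pi=\lambda:\frakA\to A$ and invoke Lemma~\ref{lem-homotopy}; the hypothesis that each $\phi_t$ factors through $\lambda$ is precisely the freeness criterion of Lemma~\ref{lem-red}, which I would verify by checking that the matrix coefficients $\langle\xi,\phi_t(x_1\cdots x_m)\eta\rangle$ against the relevant cyclic vectors vanish.

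The hard part will be the last step: engineering the rotation path $\rho_t$ (equivalently $\phi_t$) so that it is simultaneously a genuine $*$-representation at every $t$ rather than a mere convex interpolation, strictly continuous in $t$, compatible with the symmetry in the sense that $[\mathbf{F}',\rho_t(a)]$ stays in $\lK$, and, on the reduced side, factoring through $\lambda$ for every intermediate $t$. It is exactly here that Lemma~\ref{lem-red} does the essential work, forcing the whole family to descend from $\frakA$ to $A$; arranging the rotation of the base vectors so that the free-product relations are respected \emph{throughout} the homotopy is the crux and the genuinely new ingredient beyond the amenable (quantum) group case, where the counit makes this deformation unnecessary.
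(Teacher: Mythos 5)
Your overall architecture is the same as the paper's: absorb the identity bimodule into $\ker S^*=\zeta_1\frakA$ via the unitary $U=\mathbf{S}'$ (a compact perturbation of $S$), note that afterwards only the commutator condition survives, then connect the odd representation to $U(\rho^+\oplus\lambda_\frakA)U^*$ through a path of \emph{genuine} $*$-homomorphisms obtained from the universal property of $\frakA$, handle the full-side identity with Lemma \ref{lem-homotopy} (where $P=Q=\frakA$ makes the factorization hypothesis vacuous), and handle the reduced side by running the same path with $P=\frakA$, $\pi=\lambda$ and invoking Lemma \ref{lem-red}. All of this matches the paper, and your full-side plan is essentially complete modulo the $t=1$ degeneracy computation (the paper does it with the operator $S'$ and the flipped unitary $u_1$).

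The genuine gap is that you defer exactly the step that constitutes the paper's proof. Two things are missing. First, the rotation itself: the paper takes a scalar path $v_t\in\lM_2(\lC)$ from $1$ to the flip and lets $u_t$ act as $v_t$ on $\zeta_1\frakA\oplus\zeta_2\frakA$ and as the identity on its complement \emph{inside $Z^-$ only}; since $B$ acts diagonally (as $\lC 1\otimes B$) on that corner, $u_t$ commutes with $\pi^-(B)$, which is precisely what makes $\phi_t=(\pi^-\circ\iota_1)\star(\Ad u_t\circ\pi^-\circ\iota_2)$ well defined, and $u_t\in\lC 1+\lK_\frakA(Z^-)$ gives the Kasparov condition at each $t$. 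Your description of ``rotating the cyclic vectors $\xi_0,\eta_i,\zeta_i$ into one another'' is too loose: any rotation mixing $Z^+$ with $Z^-$ (i.e., moving $\xi_0$) would not commute with $B$ and the universal-property definition would collapse; only $\zeta_1\frakA$ and $\zeta_2\frakA$ may be rotated. Second, and decisively: on the reduced side, showing that $\psi_t=(\sigma\circ\iota_1)\star(\Ad w_t\circ\sigma\circ\iota_2)$ factors through $\lambda$ for $0<t<1$ is not a routine matrix-coefficient check against ``the relevant cyclic vectors.'' One must first replace $Z^-\otimes_\lambda A$ by $Z^-\otimes_{\pi_X\circ\lambda}X$ (legitimate because $\pi_X$ is faithful), and then the natural untwisted candidate subspace does not witness freeness at intermediate $t$: the paper's Claim \ref{claim} produces the \emph{twisted} subspace $\Gamma=w\,(\xi_0 B\dotimes_B X(\ell,1))+\xi_0 B\ddotimes_B X(\ell,2)$, proves its cyclicity by an induction over word-length subspaces $\frakY_m$, and verifies the freeness condition through the inclusions \eqref{hoge} involving the auxiliary spaces $\Gamma_1,\Gamma_2$. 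Finding this $w$-twisted $\Gamma$ and carrying out the induction is the missing idea — you correctly identify it as the crux and as the point where the amenable (quantum) group argument via counits is unavailable, but the proposal as written establishes only the full-side identity and leaves $\lambda\circ\alpha+\id_A=0$ unproved.
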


\begin{proof}
We first prove that $ \alpha \circ \lambda + \id_\frakA = 0$ following the proof of \cite[Theorem 3.3 (3)]{Vergnioux}.
Set $\rho^+:=\pi^+ \circ \lambda$ and $\rho^-:=\pi^- \circ \lambda$.
Define the unitary $U : Z^+ \oplus \frakA \to Z^-$ by $S$ on $Z^+$ and by $U ( 0 \oplus a ) :=\zeta_1 a $ for $a \in \frakA$.
Since $ S$ is a compact perturbation of $U$, $\alpha \circ \lambda + \id_\frakA $ is implemented by
$
(( Z^+  \oplus \frakA ) \oplus Z^-, (\rho^+ \oplus \lambda_\frakA ) \oplus \rho^-, \left[ \begin{smallmatrix} 0 &U^* \\ U* & 0 \end{smallmatrix} \right] )$ (see \S\S \ref{ss-KK}).
Take a norm continuous path $(v_t )_{0 \leq t \leq 1}$ of unitaries in $\lM_2 ( \lC)$ such that $v_0 =1$ and $v_1 =\left[ \begin{smallmatrix}0 & 1 \\ 1 & 0 \end{smallmatrix} \right]$.
With the natural identification $\lM_2 ( \lC) \subset  \lM_2 ( \lC) \otimes  \calM ( \frakA)  =\lL_\frakA (  \zeta_1 \frakA \oplus \zeta_2 \frakA)$ 
we define the unitary $u_t \in \lL_\frakA (Z^-)$ by $v_t$ on $\zeta_1 \frakA \oplus \zeta_2 \frakA$ and by the identity operator on $Z^- \ominus  ( \zeta_1 \frakA \oplus \zeta_2 \frakA)$.
Since the restriction of $B$ to $\zeta_1 \frakA \oplus \zeta_2 \frakA $ is just $\lC 1 \otimes B  \subset \lM_2 ( \lC ) \otimes \calM (A)$ with the above identification, 
the family $(u _t )_{ 0\leq t \leq 1}$ forms a norm continuous path of unitaries in $\pi^- (B)' \cap ( \lC 1 + \lK_B ( Z^- ) )$ satisfying that $u_0 = 1$ and $u_1$ switches $\zeta_1 a$ and $\zeta_2 a $ for each $ a\in \frakA$.
Let $\iota_i : A_i \hookrightarrow A$ be the inclusion map for $i = 1,2$.
Since $\pi^- \circ \iota_1$ agrees with $\Ad u_t \circ \pi^- \circ \iota_2$ on $B$,
we have the natural $*$-homomorphism $\phi_t := ( \pi^- \circ \iota_1 ) \star (  \Ad u_t \circ \pi^- \circ \iota_2 ) :\frakA \to \lL_\frakA ( Z^-)$ thanks to the universality of $\frakA$.
Then, the Kasparov bimodules
\[
\left( (Z^+ \oplus \frakA) \oplus Z^- , (\rho^+ \oplus \lambda_\frakA) \oplus \phi_t, \left[ \begin{smallmatrix} 0 &U^* \\ U & 0 \end{smallmatrix} \right] \right) ,\quad t \in [0, 1]
\]
satisfy conditions (i) and (ii) in Lemma \ref{lem-homotopy} (with $P = Q = \frakA $),
and its evaluation at $t =0$ implements $\alpha \circ \lambda + \id_\frakA$.
Thus, we need to show that $(( Z^+  \oplus \frakA ) \oplus Z^-, (\rho^+ \oplus \lambda_\frakA ) \oplus  \phi_1, \left[ \begin{smallmatrix} 0 &U^* \\ U & 0 \end{smallmatrix} \right] )$ is degenerate,
that is, 
\begin{equation}\label{eq}
U ( \rho^+ ( x) \oplus \lambda_\frakA ( x) ) = \phi_t ( x) U  \quad \text{for } x \in \frakA.
\end{equation}
Since $U$ is unitary, we may assume that $x$ is in $A_1 \cup A_2^\circ$.
When $x$ is in $A_1$, the above equation is trivial because $S$ intertwines $\pi^+ (x)$ and $\pi^- (x)$.
Let $S'$ be as in the proof of the previous lemma.
Then, we have $u_1^* U = S'$ on $Z^+$ and $u_1^* U ( 0 \oplus a ) = \zeta_2 a$ for $a \in \frakA$.
Since $S'$ intertwines the actions of $A_2$, we have
$  U ( \pi^+ (x) \oplus \lambda_\frakA ( x) ) = u_1 \pi^- (x) u_1^* U$ for every $x \in A_2$.
Thus we obtain equation (\ref{eq}), and hence
Lemma \ref{lem-homotopy} shows $\alpha \circ \lambda + \id_\frakA = 0$.

\medskip
We next prove that $\lambda \circ  \alpha + \id_{A} = 0$ in $KK (A, A)$.
Note that $\lambda \circ \alpha + \id_{A}$ is implemented by the Kasparov $A$-$A$ bimodule
\[
\left(
( (Z^+ \otimes_\lambda A) \oplus A ) \oplus ( Z^- \otimes_\lambda A ), (( \pi^+ \otimes_\lambda  1_{A}) \oplus \lambda_{A} ) \oplus ( \pi^- \otimes_\lambda 1_{A} ) , \left[ \begin{smallmatrix} 0 & U \otimes_{\lambda} 1_{A} \\ U^*\otimes _{\lambda}1_{A} & 0 \end{smallmatrix} \right]
\right)
\]
(see \S\S \ref{ss-KK}).
We observe that the family of Kasparov $\frakA$-$A$ bimodules
\[
\left ( ( ( Z^+ \otimes_\lambda {A} ) \oplus A ) \oplus  ( Z^- \otimes_\lambda A ) , ( ( \rho^+\otimes_\lambda 1_{A} ) \oplus \lambda ) \oplus ( \phi_t  \otimes_\lambda 1_{A}) , \left[ \begin{smallmatrix} 0 & U \otimes _{\lambda} 1 \\ U^* \otimes _{\lambda} 1& 0 \end{smallmatrix} \right] \right),\quad
t \in [ 0,1]
\]
satisfy conditions (i) and (ii) in Lemma \ref{lem-homotopy} (with $P =\frakA$ and $Q =A$)
and its evaluations at endpoints implement $ ( \lambda \circ \alpha + \id_{A} ) \circ \lambda$ and $0$.
Thus, by Lemma \ref{lem-homotopy} and the fact that $\pi_X : A \to \lL_B (X)$ is faithful,
it suffices to show that $\phi_t  \otimes_{\pi_X \circ \lambda} 1_X : \frakA \to \lL_B ( Z^- \otimes_{\pi_X \circ \lambda} X)$ factors through $\lambda : \frakA \to A$ for every $t \in [0,1]$.
If we set $\sigma :=\bigoplus_{i=1}^2 \pi_{Y_i} \otimes_{A_i} 1_X: A \to \lL_B ( ( Y_1 \otimes_{A_1} X ) \oplus ( Y_2 \otimes_{A_2} X) )$ and $w_t:=u_t \otimes_{\pi_X \circ \lambda} 1_X \in \lL_B (( Y_1 \otimes_{A_1} X)  \oplus ( Y_2 \otimes_{A_2} X ) )$,
then $\phi_t \otimes_{\pi_X \circ \lambda } 1_X$ coincides with $\psi_t :=( \sigma \circ \iota_1 ) \star ( \Ad w_t \circ \sigma \circ \iota_2)$.
Note that $\psi_0= \sigma \circ \lambda $ and $\psi_1 \cong(  \rho^+ \otimes_B 1_X )  \oplus \pi_X \circ \lambda$ apparently factor through $\lambda$.
Thus, we only have to deal with $0 < t < 1$ and we write $w = w_t$ for short.

For the convenience, 
we identify $X (r, i) \otimes_B X$ with $Y_i \otimes_{A_i} X$ via $S_i \otimes_{A_i} 1$ as \emph{right} $B$-modules.
To distinguish between vectors in $X (r, 1) \otimes_B X$ and $X (r, 2) \otimes_B X$,
we use the symbols $\dotimes$ and $\ddotimes$ as markers in such a way that,
for $ \zeta \in X$ we denote by $ \xi_0  \dotimes \zeta \in X (r, 1) \dotimes_B X$
and $\xi_0 \ddotimes \zeta \in X ( r ,2) \ddotimes_B X$ the vectors corresponding to $\eta_1 \otimes \zeta$ and $\eta_2 \otimes \zeta$, respectively.
Thanks to Lemma \ref{lem-red}, the proof will be completed by proving the following claim:

\begin{claim}\label{claim}
The subspace $\Gamma := w ( \xi_0 B \dotimes_B X (\ell,1) ) + \xi_0 B  \ddotimes_B X (\ell,2)$ satisfies the assumption of Lemma \ref{lem-red}.
\end{claim}

We first show that $\Gamma$ is cyclic for $\psi_t (\frakA)$.
Let $\Lambda:= [ \psi_t ( \frakA) \Gamma ]$.
We set $\frakX_0 := \xi_0 B$, $\frakX_m:= \bigotimes_{\iota \in \calI_m} X_{\iota (i)}^\circ \otimes_B \cdots \otimes_B X_{\iota (m)}^\circ$, $\frakX_m (\ell,i ) = \frakX_m \cap X (\ell,i)$ and $\frakX_m (r, i ) = \frakX_m \cap X (r, i )$ for $ m \in \lN$ and $i =1,2$.
It suffices to show that, for any $m \in \{ 0 \} \cup \lN $, $\Lambda$ contains
\[
\frakY_m := \left( \bigoplus_{k=0}^m \frakX_k (r, 1) \dotimes_B \frakX_{m-k} \right) \oplus \left(
\bigoplus_{k=0}^m \frakX_k (r, 2) \ddotimes_B \frakX_{m-k} \right). 
\]
We will show this by induction.
When $m=0$, this is trivial because $\frakY_0 = (\xi_0  B \dotimes_B  \xi_0 B )  \oplus ( \xi_0 B \ddotimes_B \xi_0 B ) = w ( \xi_0 B \dotimes_B \xi_0 B  )  + \xi_0 B \ddotimes_B \xi_0 B$.
Suppose that $\Lambda$ contains $\frakY_k$ for $0 \leq k \leq m $.
Since $w$ is equal to $1$ on the complement of $(\xi_0 B \dotimes_B X )  \oplus ( \xi_0 B \ddotimes_B X )$, it is easily seen that
\[
 \left( \bigoplus_{k=2}^{m+1} \frakX_k (r, 1) \dotimes_B \frakX_{m+1-k} \right) \oplus \left(
\bigoplus_{k=2}^{m+1} \frakX_k (r, 2) \ddotimes_B \frakX_{m+1-k} \right) \subset \Lambda. 
\]
Thus, we only have to check that $\Lambda$ contains the following six subspaces:
\begin{align*}
X_2^\circ \dotimes_B \frakX_m,
&& \xi_0 B \dotimes_B \frakX_{m+1} ( \lambda,1),
&& \xi_0 B \dotimes_B \frakX_{m+1} (\ell,2),\\
X_1^\circ \ddotimes_B \frakX_m,
&& \xi_0 B \ddotimes_B \frakX_{m+1} ( \lambda,1),
&& \xi_0 B \ddotimes_B \frakX_{m+1} (\ell,2).
\end{align*}
By assumption of induction, one has $w ( \xi_0  B \dotimes_B \frakX_m ) \subset \frakY_m \subset \Lambda$,
and hence $X_2^\circ \dotimes_B \frakX_m = [ w \sigma ( A_2^\circ ) w w^* ( \xi_0 B \dotimes_B \frakX_m ) ]  \subset \Lambda$.
We also have $X_1^\circ \ddotimes_B \frakX_m = [ \sigma (A_1 ) ( \xi_0 B \ddotimes_B \frakX_m) ] \subset \Lambda$.
We observe that $ w ( \xi_0 B \ddotimes_B \frakX_{m+1} (\ell,1 ) ) = [ w \sigma (A_2^\circ ) w^* w ( \xi_0 B \ddotimes_B \frakX_m (\ell,2) )  ]  \subset \Lambda$ and $w ( \xi_0 B \dotimes_B \frakX_{m+1} (\ell,1) ) \subset \Gamma$ by the definition of $\Gamma$.
Thus, one has
\[
\xi_0 B \dotimes_B \frakX_{m+1} (\ell,1) + \xi_0 B \ddotimes_B \frakX_{m+1} (\ell,1) = w \left( \xi_0 B \dotimes_B \frakX_{m+1} (\ell,1 ) + \xi_0 B \ddotimes_B \frakX_{m+1} (\ell,1) \right) \subset \Lambda.
\]
Finally we obtain that $ \xi_0 B \dotimes_B \frakX_{m+1} (\ell,2) =[ \sigma ( A_1 )( \xi_0 B \dotimes_B \frakX_m (\ell,1) ) ]  \subset \Lambda$ and $\xi_0 B \ddotimes_B \frakX_{m+1} (\ell,2) \subset \Gamma$ by the definition of $\Gamma$ again.
Therefore, by induction, it follows that $\Gamma$ is cyclic for $\psi_t (\frakA )$.

\medskip
We next show that $\Gamma=w ( \xi_0 B \dotimes_B X (\ell,1) ) + \xi_0 B  \ddotimes_B X (\ell,2)$ satisfies the freeness condition.
Let $\Gamma_1 :=\xi_0 B \dotimes_B X (\ell,2)^\circ $ and $\Gamma_2 := w ( \xi_0 B \ddotimes_B X (\ell,1)^\circ )$.
We then claim that the following inclusions hold:
\begin{align}\label{hoge}
\sigma ( A_1^\circ ) \Gamma \subset \Gamma_1 +  X_1^\circ \ddotimes_B X 
\quad \text{and}\quad
w \sigma (A_2^\circ ) w^* \Gamma \subset \Gamma_2 +  X_2^\circ \dotimes_B X.
\end{align}
Indeed, for any $x \in A_1^\circ$
one has
\begin{align*}
\sigma (x ) w ( \xi_0 B \dotimes_B X (\ell,1) )
&\subset \sigma ( x ) ( \xi_0 B \dotimes_B X(\ell,1) +  \xi_0 B \ddotimes_B X (\ell,1) ) \\
&\subset \xi_0 B \dotimes_B X (\ell,2)^\circ + X_1^\circ \ddotimes_B X (\ell,1)\\
&\subset  \Gamma_1 + X_1^\circ \ddotimes_B X
\end{align*}
and $\sigma (x ) ( \xi_0 B \ddotimes_B X (\ell,2) ) \subset X_1^\circ \ddotimes_B X (\ell,2)$.
Similarly, for any $y \in A_2^\circ$ one has
\begin{align*}
w \sigma (y) w^* w ( \xi_0 B \dotimes_B X (\ell,1) ) \subset w (X_2^\circ \dotimes_B X (\ell,1) ) = X_2^\circ \dotimes_B X (\ell,1)
\end{align*}
and
\begin{align*}
w \sigma (y) w^* ( \xi_0 B \ddotimes_B X (\ell,2) )
&\subset w \sigma (y) (\xi_0 B \dotimes_B X (\ell,2) +\xi_0 B \ddotimes_B X (\ell,2) ) \\
&\subset w (X_2^\circ \dotimes_B X (\ell,2)) + w ( \xi_0 B \ddotimes_B X (\ell,1)^\circ )\\
&=  X_2^\circ \dotimes_B X (\ell,2) + \Gamma_2.
\end{align*}
The subspaces on the right hand side in both equations (\ref{hoge}) are apparently orthogonal to $\Gamma$, and one can easily verify that $\sigma (A_1^\circ) \Gamma_2 \subset \Gamma_1 + X_1^\circ \ddotimes_B X $ and $w \sigma (A_2^\circ )w^* \Gamma_1 \subset \Gamma_2 + X_2^\circ \dotimes_B X $.
Since $w=1$ on the complement of $( \xi_0 B \dotimes_B X ) \oplus ( \xi_0 B \ddotimes_B X )$,
the above observations show that for any $x_k \in A_{\iota (k)}$ for $k=1,\dots, m$ with $\iota \in \calI_m$,
the subspace $\psi_t (x_1 \cdots x_m ) \Gamma$ is contained in $\Gamma_{1} + X ( \ell, 2 )^\circ \dotimes_B X + X ( \ell, 2 )^\circ \ddotimes_B X $ when $ \iota (1) =1$, and in $\Gamma_{2} + X ( \ell, 1 )^\circ \dotimes_B X + X ( \ell, 1 )^\circ \ddotimes_B X $ when $\iota (1) =2$. 
This implies that $\Gamma$ satisfies the freeness condition.
\end{proof}

\subsection{Case of countably many free components}
Let $\calI$ be a general countable set and let $\frakA = \bigstar_{B , i \in \calI } A_i$ and $(A, E) = \bigstar_{B, i \in \calI } (A_i, E^{A_i}_B )$ be as in Theorem \ref{thm-KK}.
We set $c_0 := c_0 ( \calI)$ and $\calK := \lK ( \ell^2 ( \calI ))$.

\begin{proposition}\label{prop-infty-2}
With the notation above, there exist nondegenerate conditional expectations $\sum_i E^{A_i}_B : \sum_i A_i \to c_0  \otimes B$ and $E_{c_0} \otimes \id_B : \calK  \otimes B \to c_0  \otimes B$.
If we set $\widetilde{\frakA} :=( \sum_i A_i ) \star_{c_0  \otimes B} ( \calK  \otimes B )$ and $(\widetilde{A}, \widetilde{E}):=(\sum_i A_i , \sum_i E^{A_i}_B ) \star_{c_0 \otimes B} ( \calK  \otimes B, E_{c_0} \otimes \id_B )$,
then there exist isomorphisms $\pi : \widetilde{\frakA}  \to \calK   \otimes \frakA$ and $\pi_\red : \widetilde{A} \to \calK  \otimes A$ such that 
the following diagram
\[
\begin{CD}
\widetilde{\frakA} @>\pi >> \calK \otimes \frakA  \\
@V \widetilde{\lambda} VV @VV \id_\calK \otimes \lambda V \\
\widetilde{A}  @>\pi_\red >> \calK \otimes A 
\end{CD}
\]
commutes, where $\widetilde{\lambda}$ is the canonical surjection.
\end{proposition}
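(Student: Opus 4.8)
The plan is to identify $\calK\otimes\frakA$ with $\widetilde\frakA$ and $\calK\otimes A$ with $\widetilde A$, so that $\pi$ and $\pi_\red$ emerge as these identifications. Let $(e_{kl})_{k,l\in\calI}$ denote the canonical matrix units of $\calK=\lK(\ell^2(\calI))$, so that $c_0=c_0(\calI)$ is the diagonal subalgebra and $E_{c_0}:\calK\to c_0$ the canonical faithful conditional expectation onto it. I would first record the preliminary assertions: both inclusions $c_0\otimes B\subset\sum_iA_i$ and $c_0\otimes B\subset\calK\otimes B$ are quasi-unital — for the former one approximates $(a_i)_i$ by $(b_ia_ib_i')_i$ with \emph{finitely supported} local units $(b_i)_i,(b_i')_i\in c_0\otimes B$, using quasi-unitality of $B\subset A_i$ together with $\|a_i\|\to0$; for the latter one uses $e_{kk}\calK e_{ll}=\lC e_{kl}$ — and that $\sum_iE^{A_i}_B$ and $E_{c_0}\otimes\id_B$ are conditional expectations onto $c_0\otimes B$ whose nondegeneracy follows from that of the $E^{A_i}_B$ and the faithfulness of $E_{c_0}$. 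The two embeddings $\Phi_1:\sum_iA_i\to\calK\otimes\frakA$, $(a_i)_i\mapsto\sum_ie_{ii}\otimes f_i(a_i)$ (norm-convergent by orthogonality of the $e_{ii}$ and $\|a_i\|\to0$), and $\Phi_2=\id_\calK\otimes(B\hookrightarrow\frakA)$ agree on $c_0\otimes B$, so universality of $\widetilde\frakA$ yields $\pi$; the same recipe with $g_i$ in place of $f_i$ gives embeddings $G_1,G_2$ into $\calK\otimes A$.

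For the full case I would produce a two-sided inverse $\rho$. Writing $a^{(i)}\in\widetilde\frakA$ for the image of $a\in A_i$ placed in the $i$-th summand (so that $a^{(i)}$ is the image of $e_{ii}\otimes a$ when $a\in B$, since $\Phi_1,\Phi_2$ agree on $c_0\otimes B$), the action of $\calK\otimes B$ on $\widetilde\frakA$ is nondegenerate — again by quasi-unitality, as $\widehat{(a_i)}=\lim\widehat{(b_i)}\,\widehat{(a_i')}\,\widehat{(b_i')}$ with $(b_i)_i\in c_0\otimes B$ — so the elements $e_{kl}\otimes1\in\calM(\calK\otimes B)$ extend to matrix units $V_{kl}\in\calM(\widetilde\frakA)$. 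Setting $\kappa_i(a):=\sum_kV_{ki}\,a^{(i)}\,V_{ik}$, a strictly convergent block-orthogonal sum playing the role of $1\otimes a$, the relations $V_{kl}V_{mn}=\delta_{lm}V_{kn}$ make each $\kappa_i$ a $*$-homomorphism whose image commutes with every $V_{mn}$, and the $\kappa_i$ agree on $B$ (where $\kappa_i(b)$ equals the image of $\sum_ke_{kk}\otimes b$, independently of $i$). Universality of $\frakA$ assembles them into $\kappa:\frakA\to\calM(\widetilde\frakA)$ commuting with the copy of $\calK$, and nuclearity of $\calK$ gives $\rho:=\kappa_\calK\otimes\kappa$. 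Since $\rho(e_{kl}\otimes a)=V_{ki}a^{(i)}V_{il}\in\widetilde\frakA$ for $a\in A_i$, the map $\rho$ lands in $\widetilde\frakA$, and evaluating $\pi\circ\rho$ and $\rho\circ\pi$ on the generators $e_{kl}\otimes a$, $\Phi_1((a_i)_i)$ and $\Phi_2(k\otimes b)$ shows that $\pi$ and $\rho$ are mutually inverse.

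For the reduced case I would instead verify that $\calK\otimes A$, equipped with $G_1,G_2$ and the conditional expectation $\calE:=(E_{c_0}\otimes\id_B)\circ(\id_\calK\otimes E)$, satisfies the three conditions defining the reduced free product $(\sum_iA_i,\sum_iE^{A_i}_B)\star_{c_0\otimes B}(\calK\otimes B,E_{c_0}\otimes\id_B)$; uniqueness of the reduced free product then supplies $\pi_\red$. The first two conditions are immediate, and only the freeness condition requires work: an alternating word with letters drawn from $\ker(\sum_iE^{A_i}_B)=\sum_iA_i^\circ$ and from $\ker(E_{c_0}\otimes\id_B)=\calK_0\otimes B$ (with $\calK_0$ the off-diagonal part) multiplies out to $\sum e_{pq}\otimes(\text{word in }A)$, in which the off-diagonal matrix units route between \emph{distinct} indices and each $B$-letter is absorbed into a neighbouring $A_j^\circ$-letter via $B\,A_j^\circ\subseteq A_j^\circ$. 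Hence an odd number of off-diagonal letters leaves an off-diagonal coefficient, killed by $E_{c_0}$, whereas an even number leaves a reduced alternating word of positive length in $A$, killed by $E$; either way $\calE$ vanishes. Finally the square commutes, since both composites send $(a_i)_i\mapsto\sum_ie_{ii}\otimes g_i(a_i)$ and $k\otimes b\mapsto k\otimes b$, and these exhaust the generators of $\widetilde\frakA$.

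I expect the main friction to lie not in any single conceptual step — the freeness computation is genuinely short — but in the pervasive non-unital bookkeeping. Because $c_0\otimes B$ carries no global approximate unit for $\sum_iA_i$, the quasi-unitality of the two new inclusions, the nondegeneracy needed to extract the matrix units $V_{kl}\in\calM(\widetilde\frakA)$, and the strict convergence of the sums defining $\kappa_i$ must each be justified by passing to finitely supported local units, and keeping track of these is where most of the effort goes.
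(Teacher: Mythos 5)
Your proposal is correct, and both halves take a genuinely different route from the paper's proof. For the full case, the paper never passes through multiplier algebras: using $[BA_nB]=A_n$ it defines $\sigma_n(bac)=(e_{1n}\otimes b)(f_n\otimes a)(e_{n1}\otimes c)$, assembles $\sigma=\bigstar_{n}\sigma_n:\frakA\to\widetilde{\frakA}$ by universality, and sets $\widetilde{\sigma}(e_{ij}\otimes bac)=(e_{i1}\otimes b)\sigma(a)(e_{1j}\otimes c)$, checking $\widetilde{\sigma}\circ\pi=\id_{\widetilde{\frakA}}$ on generators; your matrix units $V_{kl}\in\calM(\widetilde{\frakA})$ and the inflation $\kappa$ (with nuclearity of $\calK$ to pass from the commuting pair to $\calK\otimes\frakA$) buy a more conceptual inverse at the cost of exactly the strict-convergence and nondegeneracy bookkeeping you flag, whereas the paper's first-row/column compression concentrates all non-unital issues into the single density $[BA_nB]=A_n$. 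For the reduced case the divergence is more substantive: the paper does \emph{not} verify the axioms of the reduced free product for $\calK\otimes A$ and invoke uniqueness; it instead applies Lemma \ref{lem-red} twice --- representing $\calK\otimes A$ on $\ell^2(\lN)\otimes L^2(A,E)$ with cyclic subspace $\lC\delta_1\otimes\xi_E B$ to show $(\id_\calK\otimes\lambda)\circ\pi$ factors through $\widetilde{\lambda}$, and representing $\widetilde{A}$ on $L^2(\widetilde{A},\widetilde{E})$ with cyclic subspace $\xi_{\widetilde{E}}(c_0\otimes B)$ to show $\widetilde{\lambda}\circ\widetilde{\sigma}$ factors through $\id_\calK\otimes\lambda$ --- so that $\pi_\red$ and its inverse are inherited from the already-established bijection $\pi$. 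Your uniqueness route is legitimate (the paper asserts the three conditions determine the reduced free product), and your central computation is literally the paper's displayed identity $x_1y_1\cdots x_my_m=e_{\iota(1)\iota(m+1)}\otimes(a_1b_1\cdots a_mb_m)$ with $\iota\in\calI_{m+1}$. One slip to repair in your freeness bookkeeping: the parity dichotomy is false --- an odd number of off-diagonal letters can produce a \emph{diagonal} matrix coefficient (route $1\to2\to3\to1$), and an even number an off-diagonal one. The correct dichotomy is by presence of diagonal letters: if the alternating word contains at least one letter from $\sum_i A_i^\circ$, then after absorbing the $B$-letters the $A$-coefficient is an alternating word of positive length whose consecutive indices are distinct (forced by the off-diagonal routing), so $\id_\calK\otimes E$ kills it irrespective of the matrix part; otherwise the word is a single letter of the off-diagonal part $\calK_0\otimes B$ and $E_{c_0}\otimes\id_B$ kills it. With that repair your argument closes, and the commutativity of the square follows on generators exactly as you say.
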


\begin{proof}
Since the proof in the case when $\calI$ is finite is essentially same as (and easier than) the case when $\calI = \lN$,
we may and do assume that $\calI = \lN$.
Let $\{ e_{ij} \}_{i,j \geq 1}$ be the system of matrix units for the canonical basis $\{ \delta_i \}_{i \geq 1}$ of $\ell^2 ( \lN )$, and set $f_n :=e_{nn}$.
We realize $\sum_{n \geq 1} A_n$ and $c_0 \otimes B$ inside $\calK \otimes A$ as 
\[
\sum_{n \geq 1} A_n = \rmC^* \{ f_n \otimes a \mid n \geq 1, a \in A_n \}\quad\text{and}\quad c_0 \otimes B = \rmC^* \{ f_n \otimes b \mid n \geq 1, b \in B \}.
\]
Consider two conditional expectations $\sum_n E^{A_n}_B : \sum_n A_n \to c_0 \otimes B$ and $E_{c_0} \otimes \id_B :  \calK \otimes B \to c_0 \otimes B$ defined by $( \sum_n E^{A_n}_B )  ( f_i \otimes a ) = f_i \otimes E^{A_i}_B (a)$ and $( E_{c_0} \otimes \id_B )( e_{ij} \otimes b  ) = \delta_{i,j} f_i \otimes b$ for $i,j \in \lN$, $a \in A_i$ and $b \in B$.
Set $\widetilde{\frakA}:= (\sum_{n \geq 1} A_n ) \star_{c_0 \otimes B} ( \calK \otimes B) $ and $(\widetilde{A}, \widetilde{E}) := (\sum_{n \geq 1} A_n, \sum_n E^{A_n}_B ) \star_{c_0 \otimes B} (\calK \otimes B, E_{c_0} \otimes \id_B )$ and let $\widetilde{\lambda} : \widetilde{\frakA} \to \widetilde{A}$ be the canonical surjection.

The inclusion maps $ \sum_n A_n \hookrightarrow \calK \otimes \frakA$ and $\lK \otimes B \hookrightarrow \calK \otimes \frakA$ induce a $*$-homomorphism $\pi :  \widetilde{\frakA} \to \calK \otimes \frakA$.
For any $n, i,j  \in \lN$, $a \in A_n$ and $b ,c \in B$, one has $e_{ij} \otimes b a c = \pi (e_{i n }  \otimes b )  \pi (f_n \otimes a ) \pi ( e_{n j } \otimes c ) \in \pi ( \widetilde{\frakA } )$.  
Since $[ B A_n B ] =A_n$ holds, $\pi$ is surjective.
Define $ \sigma_n : A_n \to \widetilde{\frakA}$ by $\sigma_n (  ba c ) =(e_{1n} \otimes b ) (f_n \otimes a ) (e_{n1} \otimes c)$ for $a \in A_n$ and $b,c \in B$.
We then obtain $\sigma  = \bigstar_{n \geq 1} \sigma_n: \frakA \to \widetilde{\frakA}$.
Define $\widetilde{\sigma}: \calK \otimes \frakA \to \widetilde{\frakA}$ by $\widetilde{\sigma} ( e_{ij} \otimes  b a c ) = ( e_{i 1} \otimes b )  \sigma (a ) ( e_{1 j } \otimes c)$ for $ a\in A$ and $i,j \geq 1$.
Then, it is easy to see that $\widetilde{\sigma} \circ \pi = \id_{\widetilde{\frakA}}$, and hence $\pi$ is bijective.

We next see that $( \id_\calK \otimes \lambda) \circ \pi$ and $\widetilde{\lambda} \circ \widetilde{\sigma}$ factor through $\widetilde{\lambda}: \widetilde{\frakA} \to \widetilde{A}$ and $\id_{\calK} \otimes \lambda : \calK \otimes \frakA \to \calK \otimes A $, respectively.
Note that $( \sum_n A_n )^\circ$ and $( \calK \otimes B )^\circ $ are generated by $\calS_1 = \{ f_n \otimes a \mid a \in A_n^\circ, n \geq 1 \}$ and $\calS_2 = \{ e_{ij} \otimes b \mid i, j \geq 1, i \neq j, b \in B \}$ as normed spaces, respectively.
We represent $\calK \otimes A $ on the Hilbert $B$-module $ \ell^2 ( \lN )  \otimes L^2 (A, E) $ faithfully
and show that $ \lC \delta_1 \otimes \xi_E B$ satisfies the assumption of Lemma \ref{lem-red} for $\calS_1$ and $\calS_2$.
Let $m \geq 1$, $x_1, \dots, x_m \in \calS_1$ and $y_1 , \dots, y_m \in \calS_2$ be arbitrarily given.
If $x_1 y_2 \cdots x_m y_m$ is a nonzero element,
then it should be of the form
\[
(f_{\iota ( 1)} \otimes a_1  ) ( e_{\iota (1) \iota (2)} \otimes b_1 )  \cdots (f_{\iota ( m)} \otimes a_m  ) ( e_{\iota (m) \iota (m +1) } \otimes b_m ) = e_{\iota (1) \iota (m+1)} \otimes ( a_1 b_1 \cdots a_m b_m )
\]
for some $\iota \in \calI_{m+1}$, $a_k \in A_{\iota (k)}^\circ$ and $b_k \in B$ for $1 \leq k  \leq m$.
Clearly, this implies that $(x_1 y_1 \cdots x_m y_m) ( \delta_1 \otimes \xi_E B) \perp \delta_1 \otimes \xi_E B$.
A similar assertion holds for $x_1 y_1 \cdots y_m x_{m +1}$, $y_0 x_1 \cdots x_m y_m$ and $y_0 x_1\cdots y_m x_{m+1}$ for any $x_{m+1} \in \calS_1$ and $y_0 \in \calS_2$.
Therefore, Lemma \ref{lem-red} guarantees that $( \id_\calK \otimes \lambda ) \circ \pi$ factors through $\widetilde{\lambda}$.

Similarly, representing $\widetilde{A}$ on $L^2 (\widetilde{A}, \widetilde{E})$ faithfully we observe that $\xi_{\widetilde{E}} ( c_0 \otimes B)$ satisfies the freeness condition for $\calS_n' :=  B A_n^\circ B$, $n \geq 1$.
Indeed, for $m \geq 1$, $\iota \in \calI_m$, $y_k  \in A_{\iota (k) }^\circ$, and $b_k, c_k \in B$,
we have
\[
\sigma ( (b_1 y_1 c_1 ) \cdots ( b_m y_m c_m ) )
=( e_{1 \iota ( 1)} \otimes b_1 ) ( f_{\iota (1)} \otimes y_1 ) ( e_{\iota ( 1) \iota (2)} \otimes c_1 b_2 ) \cdots (f_{\iota (m) }\otimes y_m ) (e_{\iota (m) 1}  \otimes c_m ),
\]
which belongs to $\ker \widetilde{E}$.
Thus, $\widetilde{\lambda} \circ \sigma : \frakA \to \widetilde{A}$ factors through $\lambda : \frakA \to A$, which implies that $\widetilde{\lambda} \circ \widetilde{\sigma}$ factors through $\id_{\calK} \otimes \lambda : \calK \otimes \frakA \to \calK \otimes A$.
\end{proof}

The following general fact is well-known (see, e.g. \cite[Proposition 17.8.7]{Blackadar-book}).
\begin{proposition}\label{prop-stab}
Let $\calK$ be as above and let $\iota : \calK \hookrightarrow \lB ( \ell^2 ( \calI))$ be the inclusion map.
Fix a minimal projection $e \in \calK$.
For any separable $\rmC^*$-algebras $\calA$ and $\calB$,
the mapping $\lE ( \calA, \calB) \ni ( X, \phi, F) \mapsto ( \calK \otimes X, \lambda_\calK \otimes \phi, 1_\calK \otimes F) \in \lE ( \calK \otimes \calA, \calK \otimes \calB)$ induces an isomorphism $\tau : KK (\calA, \calB ) \to KK ( \calK \otimes \calA, \calK \otimes \calB)$.
The inverse of $\tau$ is given by the mapping $\lE ( \calK \otimes \calA, \calK \otimes \calB) \ni ( Y, \psi, G) \mapsto ( Y \otimes_{ \iota \otimes \lambda_\calB} (\ell^2 ( \calI) \otimes \calB),  (\psi \otimes_{ \iota \otimes \lambda_\calB} 1 ) \circ \sigma, G \otimes_{ \iota \otimes \lambda_\calB} 1) \in \lE ( \calA, \calB)$, where $\sigma ( a) = e \otimes a$ for $ a\in \calA$.
\end{proposition}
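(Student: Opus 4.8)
The plan is to recognise this as the stabilisation isomorphism of $KK$-theory and to reduce it to the single statement that the corner embedding $\sigma$ is a $KK$-equivalence. For each separable $\calC$ write $\mu_\calC \in KK(\calK\otimes\calC,\calC)$ for the class of the triple $(\ell^2(\calI)\otimes\calC,\iota\otimes\lambda_\calC,0)$; this is a legitimate Kasparov module since $(\iota\otimes\lambda_\calC)(\calK\otimes\calC)\subset\calK\otimes\calC=\lK_\calC(\ell^2(\calI)\otimes\calC)$, so that all three conditions hold with the zero operator. Write also $\sigma_\calC\in KK(\calC,\calK\otimes\calC)$ for the class of the $*$-homomorphism $\sigma(c)=e\otimes c$. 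The heart of the argument is to show that $\sigma_\calC$ and $\mu_\calC$ are mutually inverse $KK$-equivalences for every $\calC$; granting this, the proposition follows by expressing $\tau$ and the stated inverse as two-sided Kasparov products and using associativity.

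First I would check that $\tau$ is a well-defined group homomorphism. Exterior tensoring by the identity correspondence $\calK$ over $\calK$ sends $(X,\phi,F)$ to a Kasparov module because $\lK_{\calK\otimes\calB}(\calK\otimes X)=\calK\otimes\lK_\calB(X)$, so that $[1_\calK\otimes F,\lambda_\calK(k)\otimes\phi(a)]=\lambda_\calK(k)\otimes[F,\phi(a)]$, and likewise the expressions involving $(F-F^*)$ and $(1-F^2)$, land in $\calK\otimes\lK_\calB(X)$; the same operation carries degenerate modules to degenerate ones, respects direct sums, and, using $I(\calK\otimes\calB)\cong\calK\otimes I\calB$, carries homotopies to homotopies. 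Hence $\tau$ descends to $KK$ and is additive. The stated inverse, which I call $\tau'$, is on representatives exactly $\sigma_\calA\otimes(-)\otimes\mu_\calB$: since the operator of $\mu_\calB$ is zero, the product $y\otimes\mu_\calB$ is represented by $(Y\otimes_{\iota\otimes\lambda_\calB}(\ell^2(\calI)\otimes\calB),\psi\otimes 1,G\otimes 1)$, and restricting the left action along $\sigma$ reproduces precisely the formula in the statement.

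Next I would establish the two Morita identities. For $\sigma_\calC\otimes\mu_\calC=\id_\calC$ the computation is immediate from the special-case formula of \S\S\ref{ss-KK}: the product of the $*$-homomorphism $\sigma$ with $\mu_\calC$ is $(\ell^2(\calI)\otimes\calC,(\iota\otimes\lambda_\calC)\circ\sigma,0)$, whose left action $c\mapsto e\otimes\lambda_\calC(c)$ is supported on the corner $e\ell^2(\calI)\otimes\calC\cong\calC$, where it is the identity correspondence, while the orthogonal complement $(1-e)\ell^2(\calI)\otimes\calC$ carries the zero action and is a degenerate summand; thus this product is $\id_\calC$. The reverse identity $\mu_\calC\otimes\sigma_\calC=\id_{\calK\otimes\calC}$ is the \emph{main obstacle}, since here $\sigma$ is not nondegenerate and the simple product formula does not apply. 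One must form the interior tensor product $(\ell^2(\calI)\otimes\calC)\otimes_{\sigma}(\calK\otimes\calC)$ by hand; because $\sigma(c)=e\otimes c$ one moves $\calC$ across and identifies this module with $\ell^2(\calI)\otimes(e\calK)\otimes\calC$, and the crucial point is to recognise the row $e\calK$ as the conjugate module $\overline{\ell^2(\calI)}$, so that $\ell^2(\calI)\otimes\overline{\ell^2(\calI)}\cong\calK$ and the whole module becomes the identity correspondence $(\calK\otimes\calC,\lambda_{\calK\otimes\calC},0)$. This Morita identification, together with the bookkeeping showing that the transported left action is indeed $\lambda_{\calK\otimes\calC}$, is exactly what makes the reverse direction nontrivial.

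Finally I would assemble the pieces. By associativity of the Kasparov product and the two identities just proved, one obtains, writing $\otimes$ for the Kasparov product,
\[
\tau'(\tau(x))=(\sigma_\calA\otimes\mu_\calA)\otimes x\otimes(\sigma_\calB\otimes\mu_\calB)=x,\qquad \tau(\tau'(y))=(\mu_\calA\otimes\sigma_\calA)\otimes y\otimes(\mu_\calB\otimes\sigma_\calB)=y,
\]
once $\tau$ itself is identified with $\mu_\calA\otimes(-)\otimes\sigma_\calB$. This last identification I would verify on classes of $*$-homomorphisms $f$, where both $\tau([f])=[\id_\calK\otimes f]$ and $\mu_\calA\otimes[f]\otimes\sigma_\calB$ are computed to the same module by the corner computation of the previous paragraph, and then extend to all of $KK$ by naturality and additivity. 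This shows that $\tau$ is an isomorphism with inverse $\tau'$, as claimed.
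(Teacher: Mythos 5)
The paper offers no proof of this proposition at all---it simply cites \cite[Proposition 17.8.7]{Blackadar-book}---so your Morita-type argument via the corner embedding $\sigma_\calC$ and the class $\mu_\calC$ is the right standard route, and most of it is sound: $\mu_\calC$ is indeed a Kasparov module because the left action lands in $\lK_\calC(\ell^2(\calI)\otimes\calC)$, the identity $\sigma_\calC\otimes\mu_\calC=\id_\calC$ follows exactly as you say, and you correctly flag and handle the failure of the paper's nondegenerate product formula in the reverse identity, computing $(\ell^2(\calI)\otimes\calC)\otimes_\sigma(\calK\otimes\calC)\cong\calK\otimes\calC$ by hand. The genuine gap is in your final paragraph. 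The identity $\tau(x)=\mu_\calA\otimes x\otimes\sigma_\calB$ for \emph{all} $x\in KK(\calA,\calB)$ cannot be obtained by ``verifying on classes of $*$-homomorphisms and extending by naturality and additivity'': the subgroup of $KK(\calA,\calB)$ generated by classes of $*$-homomorphisms $\calA\to\calB$ is in general a proper subgroup (already $KK(\lC,\calB)\cong K_0(\calB)$ is not generated by classes of projections lying in $\calB$ itself), and the ``naturality'' you would invoke---compatibility of the exterior operation $\tau$ with arbitrary interior Kasparov products---is essentially the statement you are trying to prove, so the extension step is circular as written. Without that identification, the displayed chain computing $\tau'(\tau(x))$ and $\tau(\tau'(y))$ has no justification.

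The gap is easily closed by computing $\tau'\circ\tau$ directly on representatives, bypassing any identification of $\tau$ with a two-sided product. For $(X,\phi,F)\in\lE(\calA,\calB)$ there is a unitary $(\calK\otimes X)\otimes_{\iota\otimes\lambda_\calB}(\ell^2(\calI)\otimes\calB)\cong\ell^2(\calI)\otimes X$ determined by $(k\otimes x)\otimes(\xi\otimes b)\mapsto k\xi\otimes xb$ (a direct check on inner products, with dense range since $[\calK\ell^2(\calI)]=\ell^2(\calI)$ and $[X\calB]=X$). Under it the left action $((\lambda_\calK\otimes\phi)\circ\sigma)\otimes 1$ becomes $a\mapsto e\otimes\phi(a)$ and the operator $(1_\calK\otimes F)\otimes 1$ becomes $1_{\ell^2(\calI)}\otimes F$; splitting $\ell^2(\calI)\otimes X=\bigl(e\ell^2(\calI)\otimes X\bigr)\oplus\bigl((1-e)\ell^2(\calI)\otimes X\bigr)$ exhibits $\tau'(\tau([X,\phi,F]))$ as $[X,\phi,F]$ plus a degenerate module (the second summand carries the zero action), so $\tau'\circ\tau=\id_{KK(\calA,\calB)}$ with no connection technology at all. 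Your two Morita identities then do the remaining work: they show $\sigma_\calA$ and $\mu_\calB$ are $KK$-equivalences, and since $\tau'=\mu_{\calB*}\circ\sigma_\calA^{*}$ is a composition of the bijections they induce, $\tau'$ is bijective; a left inverse of a bijection is its two-sided inverse, so $\tau\circ\tau'=\id$ follows for free. (Your representation of $y\otimes\mu_\calB$ by $(Y\otimes_{\iota\otimes\lambda_\calB}(\ell^2(\calI)\otimes\calB),\psi\otimes 1,G\otimes 1)$ is correct, but strictly speaking it too needs a word: it holds because every creation operator $T_\eta$ into the tensor product is compact---as $T_\eta^*T_\eta=(\iota\otimes\lambda_\calB)(\langle\eta,\eta\rangle)$ is compact---so $G\otimes 1$ is automatically a $0$-connection.)
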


We are now ready to prove Theorem \ref{thm-KK} and Corollary \ref{cor-K-nuc}.
\begin{proof}[Proof of Theorem \ref{thm-KK} and Corollary \ref{cor-K-nuc}]
We use the notation in the proof of Proposition \ref{prop-infty-2}.
By Theorem \ref{thm-KK2} and Proposition \ref{prop-infty-2}, there exists $\beta \in KK ( \calK \otimes A, \calK \otimes \frakA)$ such that $ \beta \circ (\id_\calK \otimes \lambda)  = \id_{\calK \otimes \frakA}$ and $(\id_\calK \otimes \lambda ) \circ \beta = \id_{\calK \otimes A}$. 
Let $\tau$ be as in Proposition \ref{prop-stab}.
We then have $\id_\frakA = \tau^{-1} ( \id_{\calK \otimes \frakA} ) =\tau^{-1} ( \beta \circ (\id_{\calK} \otimes \lambda )) = \tau^{-1} ( \beta ) \circ \lambda$ and $\id_{A} = \tau^{-1} ( \id_{\calK \otimes A} ) = \tau^{-1} ( (\id_{\calK} \otimes \lambda  ) \circ \beta ) =\lambda \circ \tau^{-1} ( \beta )$. Thus, $\lambda$ gives a $KK$-equivalence.

Moreover, by Theorem \ref{thm-KK2} and Proposition \ref{prop-stab} again, $\tau^{-1} ( \beta)$ is implemented by a Kasparov $A$-$\frakA$ bimodule whose ``$\rmC^*$-correspondence part'' is the direct sum of three $\rmC^*$-correspondences of the form $(Y \otimes_D Z, \pi_Y \otimes_D 1_Z)$, where $D$ is either $c_0 \otimes B$, $\sum_i A_i$ or $\calK \otimes B$.
Thus, if $A_i$ is nuclear for every $i \in \calI$, then $\id_\frakA = \tau^{-1} ( \beta) \circ \lambda$ is also implemented a Kasparov bimodule consisting of a nuclear $\rmC^*$-correspondence (see the remark just after Theorem \ref{thm-nuclear}),
and hence $\frakA$ is $K$-nuclear.
\end{proof}
\begin{remark}
Theorem \ref{thm-KK} generalizes the previous $K$-amenability results for amalgamated free products of amenable discrete (quantum) groups \cite{Julg-Valette} and \cite{Vergnioux}.
However, we should remark that our result does not imply Pimsner's result that $K$-amenability is closed under amalgamated free products.
Similarly, Corollary \ref{cor-K-nuc} does not imply that $K$-nuclearity is closed under amalgamated free products (even for plain free products).
The latter seems a next interesting question in the direction.
\end{remark}

\section{Six-term exact sequences}

Let $(A,E) =(A_1, E^{A_1}_B) \star (A_2, E^{A_2}_B)$ is as in Theorem \ref{thm-KK2}.
We denote by $i_k : B \to A_k$ and $j_k : A_k \to A, k=1,2$ the inclusion maps.
As we mentioned in the introduction, our $KK$-equivalence and $K$-nuclearity results with Thomsen's result \cite{Thomsen}
imply the following:
\begin{corollary}
With the notation above,
there is a cyclic six-term exact sequence
\[
\begin{CD}
K_0(B) @>(i_{1*},i_{2*}) >> K_0(A_1) \oplus K_0 (A_2) @>j_{1*}- j_{2*}>>K_0(  A )  \\
@AAA                                    @.                                       @VVV \\
K_1 (A  ) @<j_{1*}- j_{2*}<< K_1 (A_1 )\oplus K_1 (A_2) @< (i_{1*},i_{2*}) << K_1 (B)
\end{CD}
\]
If $A_1$ and $A_2$ are further assumed to be nuclear,
then for any separable $\rmC^*$-algebras $D$ there is a cyclic exact sequence
\[
\begin{CD}
KK(B,D) @<i_1^*- i_2^*<< KK(A_1,D) \oplus KK (A_2,D) @<j_1^* + j_2^*<< KK( A, D)  \\
@VVV                                    @.                                       @AAA \\
KK(A , SD ) @>j_1^* + j_2^*>> KK(A_1, SD )\oplus KK (A_2, SD) @>i_1^*- i_2^*>> KK( B,SD)
\end{CD}
\]
\end{corollary}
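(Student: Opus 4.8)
The plan is to obtain both sequences by transporting the corresponding statements for the \emph{full} amalgamated free product $\frakA = A_1 \star_B A_2$ across the $KK$-equivalence $\lambda : \frakA \to A$ of Theorem \ref{thm-KK}. Denote by $f_k : A_k \hookrightarrow \frakA$ the canonical inclusions into the full free product, so that $j_k = \lambda \circ f_k$ by construction of $\lambda$. The starting point is Thomsen's computation \cite{Thomsen}, which supplies, for arbitrary separable components and with no nuclearity hypothesis, a cyclic six-term exact sequence for $\frakA$ whose horizontal maps are exactly $(i_{1*}, i_{2*})$ and $f_{1*} - f_{2*}$.

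For the first sequence I would apply the isomorphism $\lambda_* : K_*(\frakA) \to K_*(A)$ induced by the $KK$-equivalence (a $KK$-equivalence between $\frakA$ and $A$ gives $KK(\rmC, \frakA) \cong KK(\rmC, A)$, i.e.\ $K_*(\frakA) \cong K_*(A)$). Since $\lambda_* \circ f_{k*} = (\lambda \circ f_k)_* = j_{k*}$, substituting $K_*(A)$ for $K_*(\frakA)$ via $\lambda_*$ throughout Thomsen's sequence yields exactly the asserted sequence for $A$, with the connecting maps inherited verbatim. This step is purely formal once naturality of $\lambda_*$ against the structure maps is checked.

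For the second sequence I would first invoke Corollary \ref{cor-K-nuc}: when $A_1$ and $A_2$ are nuclear, both $\frakA$ and $A$ are $K$-nuclear. The key point is that under this hypothesis Thomsen's sequence for $\frakA$ is realized by a completely positive split extension, hence by a distinguished triangle in the triangulated $KK$-category, with the same structure maps induced by $i_k$ and $f_k$. Applying the contravariant (cohomological) functors $KK(-, D)$ and $KK(-, SD)$ to this triangle produces the dual long exact sequence for $\frakA$. Transporting it along the isomorphism $\lambda^* : KK(A, D) \to KK(\frakA, D)$ (and its $SD$ analogue) induced by the $KK$-equivalence, and using $j_k^* = f_k^* \circ \lambda^*$, gives the stated sequence for $A$; the signs $i_1^* - i_2^*$ and $j_1^* + j_2^*$ are forced by the orientation of the triangle.

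The step I expect to be the main obstacle is precisely the upgrade needed for the second sequence: the covariant $K$-theory sequence holds with no nuclearity assumption, but exactness of the contravariant functor $KK(-, D)$ requires the free-product sequence to come from a semisplit extension, and this is where the nuclearity of the components — channelled through the $K$-nuclearity of Corollary \ref{cor-K-nuc} — is essential rather than cosmetic. Once this triangle is secured, matching the maps across $\lambda$ and the accompanying sign bookkeeping are routine.
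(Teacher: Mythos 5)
Your derivation of the first sequence is correct and is exactly what the paper intends: the paper gives no proof beyond the remark that Theorem \ref{thm-KK}, Corollary \ref{cor-K-nuc} and Thomsen's result \cite{Thomsen} imply the corollary, and transporting Thomsen's $K$-theory sequence for $\frakA = A_1 \star_B A_2$ along the isomorphism $\lambda_*$, using $j_{k*} = \lambda_* \circ f_{k*}$, is precisely that derivation. (One small caveat: Thomsen's ``very weak assumption'' is not vacuous, but it is satisfied here because the inclusions $B \subset A_i$ come equipped with nondegenerate conditional expectations.)

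The second half, however, has a genuine gap: you misidentify what nuclearity buys. You assert that, via Corollary \ref{cor-K-nuc}, ``Thomsen's sequence for $\frakA$ is realized by a completely positive split extension, hence by a distinguished triangle'' in $KK$. But in the homotopy-pushout approach underlying \cite{Thomsen}, the extension that is available --- the double mapping cylinder extension $0 \to S\frakA \to Z \to A_1 \oplus A_2 \to 0$, with $Z$ the algebra of triples $(a_1, f, a_2)$, $f \in C([0,1], \frakA)$, $f(0)=a_1 \in A_1$, $f(1)=a_2 \in A_2$ --- is semisplit \emph{unconditionally}: the map $(a_1, a_2) \mapsto (a_1, \, ((1-t)a_1 + t a_2)_{t}, \, a_2)$ is a cpc section. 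Semisplitness was never the obstruction. What is missing without nuclearity is the identification of $B$ with $Z$ in $KK$; Thomsen establishes the invertibility of $B \to Z$ only in $E$-theory (and in $K$-theory for the covariant case $D = \mathbb{C}$), which is exactly why his contravariant six-term sequence is an $E$-theoretic statement and why the present paper remarks that the contravariant $KK$ sequence ``is new even in the full case.'' Taken literally, your argument would produce the contravariant $KK$ sequence for $\frakA$ with no nuclearity hypothesis at all, contradicting that remark. The bridge you need to supply is the $E$-to-$KK$ comparison: nuclearity of $A_1, A_2$ passes to $B$ (the range of a conditional expectation on a nuclear $\rmC^*$-algebra is nuclear), Corollary \ref{cor-K-nuc} gives $K$-nuclearity of $\frakA$, and for a ($K$-)nuclear algebra in the first variable the Connes--Higson map $KK(\,\cdot\,, D) \to E(\,\cdot\,, D)$ is an isomorphism (Skandalis \cite{Skandalis}); applying this at every entry of Thomsen's contravariant $E$-theory sequence converts it into the asserted $KK$ sequence for $\frakA$, after which your transport along $\lambda^*$ and the sign bookkeeping are fine. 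Without this comparison (or an equivalent appeal to Skandalis's results on extensions of $K$-nuclear algebras), the distinguished triangle you invoke is unsubstantiated.
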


Note that the second exact sequence of $KK$-groups is new even in the full case.
We also obtain the next corollary from Theorem \ref{thm-KK} and \cite{Eliasen}. 

\begin{corollary}
With the notation above, suppose that $B$ is a direct sum of finite dimensional $\rmC^*$-algebras.
Then, for any separable $\rmC^*$-algebra $D$ there are two cyclic exact sequences:
\[
\begin{CD}
KK(D,B) @>(i_{1*},i_{2*}) >> KK(D,A_1) \oplus KK (D, A_2) @>j_{1*}- j_{2*}>>KK( D, A )  \\
@AAA                                    @.                                       @VVV \\
KK(SD,A  ) @<j_{1*}- j_{2*}<< KK(SD,A_1 )\oplus KK (SD, A_2) @<(i_{1*},i_{2*}) <<KK( SD, B)\\
\end{CD}\]
\[ \begin{CD}
KK(B,D) @<i_1^*- i_2^*<< KK(A_1,D) \oplus KK (A_2,D) @<j_1^* + j_2^*<< KK( A, D)  \\
@VVV                                    @.                                       @AAA \\
KK(A , SD ) @>j_1^* + j_2^*>> KK(A_1, SD )\oplus KK (A_2, SD) @>i_1^*- i_2^*>> KK( B,SD)
\end{CD}
\]
\end{corollary}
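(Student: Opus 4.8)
The plan is to obtain both sequences from the corresponding ones for the \emph{full} amalgamated free product $\frakA = A_1 \star_B A_2$ and then transport them to the reduced product $A$ through the $KK$-equivalence $\lambda$ of Theorem \ref{thm-KK}. The point is that, under the stated hypothesis that $B$ is a direct sum of finite-dimensional $\rmC^*$-algebras, the two cyclic six-term exact sequences associated with the full free product (in both the covariant variable $KK(D,-)$ and the contravariant variable $KK(-,D)$) are precisely what \cite{Eliasen} supplies; all the genuine analytic content of the corollary is therefore external, and what remains is a functorial transport.

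First I would recall Eliasen's sequences in the form needed here. Writing $f_k : A_k \hookrightarrow \frakA$ for the canonical inclusions into the full product, \cite{Eliasen} provides, for every separable $\rmC^*$-algebra $D$, two cyclic six-term exact sequences identical to the ones in the statement \emph{except} that every occurrence of $A$ is replaced by $\frakA$ and every map $j_{k*}$ (resp.\ $j_k^*$) is replaced by $f_{k*}$ (resp.\ $f_k^*$); the maps induced by $i_k : B \to A_k$ are unchanged. Next I would invoke Theorem \ref{thm-KK}: since $\lambda : \frakA \to A$ is a $KK$-equivalence, for every separable $D$ the induced maps $\lambda_* : KK(D,\frakA) \to KK(D,A)$ and $\lambda^* : KK(A,D) \to KK(\frakA,D)$ are isomorphisms, and likewise with $D$ replaced by $SD$. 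The crucial compatibility is the identity $\lambda \circ f_k = j_k$ for $k=1,2$, expressing that the inclusions of the free components into the full and reduced products agree after applying $\lambda$. It yields $\lambda_* \circ f_{k*} = j_{k*}$ and $f_k^* \circ \lambda^* = j_k^*$.

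Finally I would assemble a commutative ladder whose top row is Eliasen's sequence for $\frakA$ and whose vertical arrows are the isomorphisms $\lambda_*$ (resp.\ $\lambda^*$) at the $\frakA$-node and identities elsewhere. The boundary maps for the $A$-sequences are defined by precomposing (resp.\ postcomposing) Eliasen's boundary maps with $\lambda_*^{-1}$ (resp.\ $(\lambda^*)^{-1}$); by the displayed relations the horizontal maps $f_{k*}$ then become exactly $j_{k*}$ and $f_k^*$ become exactly $j_k^*$, while the $i_k$-maps are untouched. Since an exact sequence transported along isomorphisms that intertwine all the structure maps remains exact, the bottom row is the desired sequence for $A$. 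The step most deserving of care is checking that this ladder commutes at \emph{every} node, including the boundary maps; but each horizontal arrow is induced either by one of the structure $\ast$-homomorphisms $i_k, f_k, j_k$ or is a boundary map defined through the $KK$-equivalence itself, so commutativity reduces to the single identity $\lambda \circ f_k = j_k$ together with associativity of the Kasparov product, and no real obstacle arises beyond this bookkeeping.
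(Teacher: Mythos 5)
Your proposal is correct and is essentially the paper's own argument: the paper obtains this corollary in a single line by combining Eliasen's two six-term exact sequences for the full amalgamated free product $\frakA = A_1 \star_B A_2$ (available precisely when $B$ is a direct sum of finite-dimensional $\rmC^*$-algebras) with the $KK$-equivalence $\lambda : \frakA \to A$ of Theorem~\ref{thm-KK}, transported along the isomorphisms $\lambda_*$ and $\lambda^*$ using $\lambda \circ f_k = j_k$, exactly as you describe. The ladder-commutativity bookkeeping you spell out, including defining the boundary maps of the $A$-sequences by conjugation with these isomorphisms, is the entire content of the step the paper leaves implicit.
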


Finally,
we would like to point out that a similar result holds for HNN extensions.
We refer the reader to \cite{Ueda-05, Ueda-08} for HNN extensions of $\rmC^*$-algebras.
The next corollary follows from ``the $\rmC^*$-version of Proposition 3.1'', Proposition 3.3 and Proposition 4.2 in \cite{Ueda-08} and Theorem \ref{thm-KK}.
\begin{corollary}
Let $B \subset A$ be unital inclusion of separable $\rmC^*$-algebras with nondegenerate conditional expectation $E : A \to B$,
and $\theta : B \to A$ be an injective $*$-homomorphism whose image is the range of a conditional expectation $E_\theta : A \to \theta (B)$.
Then, the full HNN-extension $A \star_B^{\rm univ} \theta$ and the reduced one $(A, E ) \star_B ( \theta, E_\theta )$ are $KK$-equivalent via the canonical surjection, and there is a six-term exact sequence:
\[
\begin{CD}
K_0(B ) @>(\theta_{*}- \iota_{B *}) >> K_0(A) @>\iota_{A *}>>K_0(  (A, E ) \star_B ( \theta, E_\theta ))  \\
@AAA                                    @.                                       @VVV \\
K_1 ((A , E ) \star_B ( \theta, E_\theta) ) @<\iota_{A *}<< K_1 (A ) @<(\theta_{*}- \iota_{B *}) <<K_1( B )
\end{CD}
\]
Here $\iota_B : B \to A$ and $\iota_A: A \to (A ,E ) \star_B ( \theta, E_\theta ) $ are inclusion maps.
Further assume that $A$ is nuclear.
Then, these HNN-extensions are $K$-nuclear.
\end{corollary}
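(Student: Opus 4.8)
The plan is to reduce the whole statement to Theorem~\ref{thm-KK} by realizing both HNN extensions as full corners of amalgamated free products, following Ueda \cite{Ueda-08}. The ``$\rmC^*$-version of Proposition 3.1'' and Proposition 3.3 of \cite{Ueda-08} furnish a full amalgamated free product $\frakN$ and its reduced counterpart $\calN$, whose amalgam and free components are assembled out of $A$, $B$ and $\theta$, together with a projection $p$ that is full in both $\frakN$ and $\calN$, such that $A \star_B^\univ \theta \cong p\, \frakN\, p$ and $(A, E) \star_B (\theta, E_\theta) \cong p\, \calN\, p$ as $\rmC^*$-algebras. The projection $p$ is the same element of the two algebras and is fixed by the canonical surjection.

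First I would check that the canonical surjection $\Lambda : \frakN \to \calN$ provided by the universal property of $\frakN$ satisfies $\Lambda (p) = p$ and restricts, under the above identifications, to the canonical surjection $\lambda_\theta : A \star_B^\univ \theta \to (A, E) \star_B (\theta, E_\theta)$. Consequently the corner inclusions $p\frakN p \hookrightarrow \frakN$ and $p\calN p \hookrightarrow \calN$, together with $\lambda_\theta$ and $\Lambda$, form a commuting square of $\rmC^*$-algebras. The two corner inclusions are $KK$-equivalences because $p$ is full (Morita invariance of $KK$-theory; see \cite{Blackadar-book}), and $\Lambda$ is a $KK$-equivalence by Theorem~\ref{thm-KK}. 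Reading the square in $KK$-theory and inverting the three equivalences already in hand forces $\lambda_\theta$ to be a $KK$-equivalence, which is the first assertion of the corollary.

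The six-term exact sequence is then obtained by feeding this $KK$-equivalence into Proposition 4.2 of \cite{Ueda-08}, which produces the Pimsner-type cyclic sequence for the HNN extension once the reduced and full versions are known to agree in $K$-theory; the boundary maps are exactly those induced by the two inclusions of $B$ into $A$ (giving $\theta_* - \iota_{B*}$) and by $\iota_A$. For the final $K$-nuclearity claim, suppose $A$ is nuclear. Composing a completely positive approximation of $\id_A$ with $B \hookrightarrow A$ and $E : A \to B$ exhibits the completely positive approximation property for $B$, so $B$ is nuclear; hence all the free components of $\frakN$ and $\calN$ (being built from $A$ and $B$) are nuclear, and Corollary~\ref{cor-K-nuc} shows that $\frakN$ and $\calN$ are $K$-nuclear. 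Since a full corner of a separable $\rmC^*$-algebra is stably isomorphic to it (Brown--Green--Rieffel) and $K$-nuclearity is a stable invariant (\cite{Skandalis}), the corners $p\frakN p$ and $p\calN p$, i.e.\ the two HNN extensions, are $K$-nuclear as well.

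The step that needs genuine care is the naturality in the second paragraph: one has to confirm that Ueda's full and reduced corner realizations are implemented by one and the same full projection $p$ and that the amalgamated free product surjection $\Lambda$ really restricts to $\lambda_\theta$, so that the commuting square is legitimate and the $KK$-equivalence descends to the corners instead of merely relating the ambient free products. Everything else is a direct assembly of Theorem~\ref{thm-KK}, Corollary~\ref{cor-K-nuc}, and the cited results of \cite{Ueda-08}.
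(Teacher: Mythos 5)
Your proposal is correct and takes essentially the same route as the paper, whose entire proof is the citation of ``the $\rmC^*$-version of Proposition 3.1'', Proposition 3.3 and Proposition 4.2 of Ueda's paper together with Theorem \ref{thm-KK}: realize both HNN extensions as compatible full corners of a full/reduced pair of amalgamated free products, transport the $KK$-equivalence of Theorem \ref{thm-KK} across the two (Morita) corner inclusions, and feed the result into Ueda's six-term sequence. The details you supply --- the commuting square identifying the restricted surjection with $\lambda_\theta$, nuclearity of $B$ via $E$ so that Corollary \ref{cor-K-nuc} applies, and stability of $K$-nuclearity under Brown--Green--Rieffel --- are precisely the steps the paper leaves implicit.
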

\begin{remark}
Using Proposition \ref{prop-stab} we can generalize the results in this section to amalgamated free products and HNN extensions of countably many $\rmC^*$-algebras and countably many injective $*$-homomorphisms, respectively.
Such generalizations for HNN extensions include Pimsner--Voiculescu's six-term exact sequence for crossed products by free groups (\cite{Pimsner-Voiculescu-80, Pimsner-Voiculescu-82}) as special cases (see also \cite{Ueda-08}).
\end{remark}

\end{document}